\newtheorem{assumption}{\hspace{0pt}\bf Assumption}
\newtheorem{definition}{\hspace{0pt}\bf Definition}
\newtheorem{proposition}{\hspace{0pt}\bf Proposition}
\newtheorem{lemma}{\hspace{0pt}\bf Lemma}
\newtheorem{theorem}{\hspace{0pt}\bf Theorem}
\newtheorem{corollary}{\hspace{0pt}\bf Corollary}
\begin{document}

\title{Convergence Rate of $\mathcal{O}(1/k)$ for Optimistic Gradient and Extra-gradient Methods in Smooth Convex-Concave Saddle Point Problems}

\date{}

\author{Aryan Mokhtari\thanks{The authors are in alphabetical order.} \thanks{Department of Electrical and Computer Engineering, University of Texas at Austin, Austin, TX. mokhtari@austin.utexas.edu}, Asuman Ozdaglar$^*$\thanks{Department of Electrical Engineering and Computer Science, Massachusetts Institute of Technology, Cambridge, MA, USA. asuman@mit.edu.}, Sarath Pattathil$^*$\thanks{Department of Electrical Engineering and Computer Science, Massachusetts Institute of Technology, Cambridge, MA, USA. sarathp@mit.edu.}
}

\maketitle
%%%%%%%%%%%%%%%%%%%%%%%%%%%%%%%%%%%%%%%
%%%%%%%%%%%%%%%%%%%%%%%%%%%%%%%%%%%%%%%
%%%%%    A B  S  T  R  A  C  T  %%%%%%%%%%%%%%%%%%%%%% 
%%%%%%%%%%%%%%%%%%%%%%%%%%%%%%%%%%%%%%%
%%%%%%%%%%%%%%%%%%%%%%%%%%%%%%%%%%%%%%%
\begin{abstract}
We study the iteration complexity of the optimistic gradient descent-ascent (OGDA) method and the extra-gradient (EG) method for finding a saddle point of a convex-concave unconstrained min-max problem. To do so, we first show that both OGDA and EG can be interpreted as approximate variants of the proximal point method. This is similar to the approach taken in  \citep{nemirovski2004prox} which analyzes EG as an approximation of the `conceptual mirror prox'. In this paper, we highlight how gradients used in OGDA and EG try to approximate the gradient of the Proximal Point method. We then exploit this interpretation to show that both algorithms produce iterates that remain within a bounded set. We further show that the primal dual gap of the averaged iterates generated by both of these algorithms converge with a rate of $\mathcal{O}(1/k)$. Our theoretical analysis is of interest as it provides a the first convergence rate estimate for OGDA in the general convex-concave setting. Moreover, it provides a simple convergence analysis for the EG algorithm in terms of function value without using compactness assumption.  
\end{abstract}

%!TEX root = main.tex

%%%%%%%%%%%%%%%%%%%%%%%%%%%%%%%%%%%%%%%
%%%%%%%%%%%%%%%%%%%%%%%%%%%%%%%%%%%%%%%
%%%%%    S  E  C  T  I  O  N   %%%%%%%%%%%%%%%%%%%%%%%  
%%%%%%%%%%%%%%%%%%%%%%%%%%%%%%%%%%%%%%%
%%%%%%%%%%%%%%%%%%%%%%%%%%%%%%%%%%%%%%%
\section{Introduction}\label{sec:intro}

Given a function $f:\reals^m\times \reals^n \to \reals$, we consider finding a saddle point of the problem
%%%
\begin{equation}\label{main_prob}
\min_{\bbx \in \reals^m}\max_{\bby \in \reals^n} \ f(\bbx,\bby),
\end{equation}
%%%
where a saddle point of Problem \eqref{main_prob} is defined as a pair $(\bbx^*, \bby^*) \in \reals^m \times \reals^n $ that satisfies
$$ f(\bbx^*, \bby) \leq f(\bbx^*, \bby^*) \leq f(\bbx, \bby^*) $$
for all $\bbx \in \reals^m, \bby \in \reals^n$. Throughout the paper, we assume that the function $f(\bbx,\bby)$ is \textit{convex-concave}, i.e., for any $\bby \in \mathbb{R}^n$, the function $f(\bbx,\bby)$ is a convex function of $\bbx$ and for any $\bbx \in \mathbb{R}^m$, the function $f(\bbx,\bby)$ is a concave function of $\bby$. This formulation arises in several areas, including zero-sum games \citep{basar1999dynamic}, robust optimization \citep{ben2009robust}, robust control \citep{hast2013pid}  and more recently in machine learning in the context of Generative Adversarial Networks (GANs) (see \citep{goodfellow2014generative} for an introduction to GANs and \citep{pmlr-v70-arjovsky17a} for the formulation of Wasserstein GANs).

Our goal in this paper is to analyze the convergence rate of some discrete-time gradient based optimization algorithms for finding a saddle point of Problem \eqref{main_prob} in the convex-concave case. In particular, we focus on Extra-gradient (EG) and Optimistic Gradient Descent Ascent (OGDA) methods because of their widespread use for training GANs (see \citep{DBLP:journals/corr/abs-1711-00141, DBLP:journals/corr/abs-1802-06132}). EG method is a classical algorithm for solving saddle point problems introduced by \citet{korpelevich1976extragradient}. Its linear rate of convergence for smooth and strongly convex-strongly concave functions $f(\bbx, \bby)$ \footnote{Note that when we state that $f(\bbx,\bby)$ is strongly convex-strongly concave, it means that $f(\cdot, \bby)$ is strongly convex for all $\bby \in \reals^n$ and $f(\bbx, \cdot)$ is strongly concave for all $\bbx \in \reals^m$.} and bilinear functions, i.e., $f(\bbx, \bby) = \bbx^{\top}\bbA \bby$ (where $\bbA$ is a square, full rank matrix), was established in \citet{korpelevich1976extragradient} as well as the variational inequality literature (see \citep{tseng_1} and \citep{facchinei2007finite}). Its $\mathcal{O}(1/k)$ convergence rate for the constrained convex-concave setting was first established by \cite{nemirovski2004prox} under the assumption that the feasible set is convex and compact.\footnote{The result in \citep{nemirovski2004prox} shows a $\mathcal{O}(1/k)$ convergence rate for the mirror-prox algorithm which specializes to the EG method for the Euclidean case.} \cite{monteiro2010complexity} established a similar $\mathcal{O}(1/k)$ convergence rate for EG without assuming compactness of the feasible set by using a new termination criterion that relies on enlargement of the operator of the VI reformulation of the saddle point problem defined in \citep{burachik1997enlargement}. OGDA was introduced by \cite{popov1980modification}, as a variant of the Extragradient method, and has gained popularity recently due to its performance in training GANs (see \citep{DBLP:journals/corr/abs-1711-00141}). To the best of our knowledge, iteration complexity of OGDA for the convex-concave case has not been studied before. 

In this paper, we provide a unified convergence analysis for establishing a sublinear convergence rate of $\mathcal{O}(1/k)$ in terms of the function value difference of the averaged iterates and a saddle point for both OGDA and EG for convex-concave saddle point problems. Our analysis holds for unconstrained problems and does not require boundedness of the feasible set, and it establishes rate results using the function value differences as used in \citep{nemirovski2004prox} (suitably redefined for an unconstrained feasible set, see Section \ref{sec:EG}). Therefore, we get convergence of the EG method in unconstrained spaces without using the modified termination (error) criterion proposed in \citep{monteiro2010complexity}. The key idea of our approach is to view both OGDA and EG iterates as approximations of the iterates of the proximal point method that was first introduced by \cite{martinet1970breve} and later studied by \cite{rockafellar1976monotone}. 
We would like to add that the idea of interpreting EG as an approximation of the Proximal Point method was first studied in \citep{nemirovski2004prox}. He considers the conceptual mirror prox, which is similar to the proximal point method, and shows that the mirror prox algorithm (of which EG is a special case) provides a good implementable approximation to this method. Further, \cite{monteiro2010complexity} use a similar interpretation and propose the Hybrid Proximal Extragradient method to establish the convergence of EG in unbounded settings using a different convergence criteria. More recently, \cite{mokhtari2019unified} study both OGDA and EG as approximations of proximal point method and analyze these algorithms for bilinear and strongly convex-strongly concave problems.

More specifically, we first consider a proximal point method with error and establish some key properties of its iterates. We then focus on OGDA as an approximation of proximal point method and use this connection to show that the iterates of OGDA remain in a compact set. We incorporate this result to prove a sublinear convergence rate of $\mathcal{O}(1/k)$ for the primal-dual gap of the averaged iterates generated by the OGDA update. We next consider EG where two gradient pairs are used in each iteration, one to compute a midpoint and other to find the new iterate using the gradient of the midpoint. Our first step again is to show boundedness of the iterates generated by EG. We then approximate the evolution of the midpoints using a proximal point method and use this approximation to establish $\mathcal{O}(1/k)$ convergence rate for the function value of the averaged iterates generated by EG. 
As the convergence results of EG have already been established in papers including \citep{nemirovski2004prox} and \cite{monteiro2010complexity}, we relegate the proofs of Lemmas and Theorems corresponding to EG to the Appendix.

\subsection*{Related Work}

Several recent papers have studied the convergence rate of OGDA and EG for the case when the objective function is bilinear 
%i.e., $f(\bbx, \bby) = \bbx^{\top} \bbA \bby$ 
or strongly convex-strongly concave. 
%The convergence properties of OGDA were recently studied by 
\cite{DBLP:journals/corr/abs-1711-00141} showed the convergence of the OGDA iterates to a neighborhood of the solution when the objective function is bilinear.  \cite{DBLP:journals/corr/abs-1802-06132} used a dynamical system approach to prove the linear convergence of the OGDA method for the special case when $f(\bbx,\bby) = \bbx^{\top} \bbA \bby$ and the matrix $\bbA$ is square and full rank. They also presented a linear convergence rate of the vanilla Gradient Ascent Descent (GDA) method when the objective function $f(\bbx, \bby)$ is strongly convex-strongly concave. 
\cite{gidel2018variational} considered a variant of the EG method, relating it to OGDA updates, and showed the linear convergence of the corresponding EG iterates in the case where $f(\bbx, \bby)$ is strongly convex-strongly concave (though without showing the convergence rate for the OGDA iterates).
Optimistic gradient methods have also been studied in the context of convex online learning \citep{DBLP:journals/jmlr/ChiangYLMLJZ12,DBLP:conf/colt/RakhlinS13,rakhlin2013optimization}.

%\cite{nemirovski2004prox} shows $O(1/k)$ convergence rate for the mirror-prox algorithm (a special case of which is the EG method) in convex-concave saddle point problems over compact sets. 

\cite{nedic2009subgradient} analyzed the (sub)Gradient Descent Ascent (GDA) algorithm for convex-concave saddle point problems when the (sub)gradients are bounded over the constraint set, showing a convergence rate of $\mathcal{O}(1/ \sqrt{k})$  in terms of the function value difference of the averaged iterates and a saddle point.

%[Monteiro] studies the convergence rate of the hybrid proximal extragradient (HPE) method and show the convergence of this algorithm for solving VIs at a rate of $\mathcal{O}(1/k)$. They show that EG is a special case of the HPE method and therefore, generalize the results of \cite{nemirovski2004prox} for unbounded sets. However, they use a different error criterion from \cite{nemirovski2004prox}, which enables them to generalize the results to unbounded sets.

\cite{chambolle2011first} focused on a particular case of the saddle point problem where the coupling term in the objective function is bilinear, i.e., $f(\bbx, \bby) = G(\bbx) + \bbx^{\top} \bbK \bby - H(\bby)$ with $G$ and $H$ convex functions. They proposed a proximal point based algorithm which converges at a rate $\mathcal{O}(1/k)$ and further showed linear convergence when the functions $G$ and $H$ are strongly convex. 
\cite{chen2014optimal} proposed an accelerated variant of this algorithm when $G$ is smooth and showed an optimal rate of $\mathcal(\frac{L_G}{k^2} + \frac{L_K}{k} )$, where $L_G$ and $L_K$ are the smoothness parameters of $G$ and the norm of the linear operator $K$ respectively. 
%In the same setting \cite{chambolle2011first} get a rate of $\mathcal(\frac{L_G}{k} + \frac{L_K}{} )$. Also this is the same rate achieved in  \cite{nemirovski2004prox} when specialized to this problem.
%\cite{DBLP:journals/corr/abs-1810-02060} study the minmax problem in which $f(\bbx,\bby)$ is non-convex in $\bbx$ but concave in $\bby$. They study the deterministic as well as the stochastic version of the problem using a proximal algorithm.
%Several papers study the special case of Problem~\eqref{main_prob} when the objective function is of the form $f(\bbx, \bby) = g(\bbx) + \bbx^{\top} \bbA \bby - h(\bby)$.  
When the functions $G$ and $H$ are strongly convex, primal-dual gradient-type methods converge linearly, as shown in  \cite{chen1997convergence,bauschke2011convex}.
Further, \cite{DBLP:journals/corr/abs-1802-01504} showed that GDA achieves a linear convergence rate in this linearly coupled setting when $G$ is convex and $H$ is strongly convex. 
%\cite{chambolle2011first} introduce a primal-dual variant of the proximal point method that converges to a saddle point at a sublinear rate of $\mathcal{O}(1/k^2)$ when  $g, h$ are convex and at a linear convergence rate when $g, h$ are strongly convex.

For the case that $f(\bbx, \bby)$ is strongly concave with respect to $\bby$, but possibly nonconvex with respect to $\bbx$, \cite{sanjabi2018convergence} provided convergence to a first-order stationary point using an algorithm that requires running multiple updates with respect to $\bby$ at each step. %Recently, \cite{sanjabi2018solving} extended this result to the setting that $f$ is Polyak-Lojasiewicz with respect to $\bby$. %The algorithm, however involves running multiple dual updates after each primal update. The main idea used is that the inner maximization problem is solved ``almost" exactly before running each primal update, and therefore the final analysis reduces to that of running gradient descent on a non-convex problem plus and additional error term which can be controlled. 

\medskip\noindent{\bf Notation.\quad} Lowercase boldface $\bbv$ denotes a vector and uppercase boldface $\bbA$ denotes a matrix. We use $\|\bbv\|$ to denote the Euclidean norm of vector $\bbv$. Given a multi-input function $f(\bbx,\bby)$, its gradient with respect to $\bbx$ and $\bby$ at points $(\bbx_0,\bby_0)$ are denoted by $\nabla_\bbx f(\bbx_0,\bby_0)$ and $\nabla_\bby f(\bbx_0,\bby_0)$, respectively. We refer to the largest and smallest eigenvalues of a matrix $\bbA$ by $\lambda_{\max}(\bbA)$ and  $\lambda_{\min}(\bbA)$, respectively.

%!TEX root = main.tex

%%%%%%%%%%%%%%%%%%%%%%%%%%%%%%%%%%%%%%%
%%%%%%%%%%%%%%%%%%%%%%%%%%%%%%%%%%%%%%%
%%%%%    S  E  C  T  I  O  N   %%%%%%%%%%%%%%%%%%%%%%%  
%%%%%%%%%%%%%%%%%%%%%%%%%%%%%%%%%%%%%%%
%%%%%%%%%%%%%%%%%%%%%%%%%%%%%%%%%%%%%%%
\section{Preliminaries}\label{sec:preliminaries}

In this section we present properties and notations used in our results.

\begin{definition} \label{def:lips_grad}
A function $\phi: \reals^n\to\reals$ is $L$-smooth if it has $L$-Lipschitz continuous gradients on $\reals^n$, i.e., for any $\bbx, \hbx \in \reals^n$, we have
\begin{align}
|| \nabla \phi(\bbx) - \nabla \phi(\hbx) || \leq L ||\bbx - \hbx||. \nonumber
\end{align}
\end{definition}

%
%\begin{definition}  \label{def:convex}
%A continuously differentiable function $\phi: \reals^n\to\reals$ is called convex on $\mathbb{R}^n$ if for any $\bbx, \hbx \in \reals^n$ we have
%\begin{align}
%\phi(\hbx) \geq \phi(\bbx) + \nabla \phi(\bbx)^\top (\hbx - \bbx).
%\end{align}
%Further, $\phi(\bbx)$ is concave if $-\phi(\bbx)$ is convex. 
%\end{definition} 

\begin{definition}  \label{def:strong_convex}
A continuously differentiable function $\phi: \reals^n\to\reals$ is convex on $\mathbb{R}^n$ if for any $\bbx, \hbx \in \reals^n$, we have
\begin{align}
\phi(\hbx) \geq \phi(\bbx) + \nabla \phi(\bbx) ^T (\hbx - \bbx).  \nonumber
\end{align}
Further, $\phi(\bbx)$ is concave if $-\phi(\bbx)$ is convex.
\end{definition} 

\begin{definition}  \label{def:saddle_point}
The pair $(\bbx^*, \bby^*)$ is a saddle point of a convex-concave function $f(\bbx, \bby)$, if for any $\bbx \in \reals^n$ and $ \bby \in \reals^m$, we have
\begin{align}
f(\bbx^*, \bby) \leq f(\bbx^*, \bby^*) \leq f(\bbx, \bby^*). \nonumber
\end{align}
\end{definition} 

%\begin{definition}  \label{def:small_o}
%A function $f(\bbx)$ is order $o(\|\bbx\|^m)$ if:
%\begin{align}
%\frac{f(\bbx)}{\|\bbx\|^m} \rightarrow 0, \quad \text{as }\|\bbx\| \rightarrow 0
%\end{align}
%%as $\|\bbx\| \rightarrow 0$.
%\end{definition} 

Throughout the paper, we will assume that the following conditions are satisfied.

\begin{assumption}
\label{ass:convex_concave}
The function $f(\bbx, \bby)$ is continuously differentiable in $\bbx$ and $\bby$. Further, for any $\bby \in \mathbb{R}^n$, the function $f(\bbx,\bby)$ is a convex function of $\bbx$ and for any $\bbx \in \mathbb{R}^m$, the function $f(\bbx,\bby)$ is a concave function of $\bby$. 
\end{assumption}

\begin{assumption}
\label{ass:scsc}
The gradient $\nabla_{\bbx}f(\bbx, \bby)$, is $L_{xx}$-Lipschitz with respect to $\bbx$ and $L_{xy}$-Lipschitz with respect to $\bby$ and the gradient $\nabla_{\bby}f(\bbx, \bby)$, is $L_{yy}$-Lipschitz with respect to $\bby$ and $L_{yx}$-Lipschitz with respect to $\bbx$, i.e., \begin{align*}
\| \nabla_{\bbx}f(\bbx_1, \bby) - \nabla_{\bbx}f(\bbx_2, \bby)\| &\leq L_{xx}\| \bbx_1 - \bbx_2 \| \quad \forall \: \bby,  \\
\| \nabla_{\bbx}f(\bbx, \bby_1) - \nabla_{\bbx}f(\bbx, \bby_2)\| &\leq L_{xy}\| \bby_1 - \bby_2 \| \quad \forall \: \bbx,  \\
\| \nabla_{\bby}f(\bbx, \bby_1) - \nabla_{\bby}f(\bbx, \bby_2)| &\leq L_{yy}\| \bby_1 - \bby_2 \| \quad {\forall} \: \bbx, \\
\| \nabla_{\bby}f(\bbx_1, \bby) - \nabla_{\bby}f(\bbx_2, \bby)| &\leq L_{yx}\| \bbx_1 - \bbx_2 \| \quad {\forall} \: \bby.
\end{align*}
We define $L := 2 \times \max \{ L_{xx},  L_{xy}, L_{yx}, L_{yy} \}$. \footnote{  In this definition we need an additional factor of 2 because in the analysis we use $L$ as the Lipschitz continuity of the operator $F(\cdot) = [\nabla_\bbx f(\cdot); -\nabla_\bby f(\cdot)]$.}
\end{assumption}
%Define the solution set $\mathcal{Z}^*$ as follows:
%\begin{align}
%\label{eq:defi_soln_set}
%\mathcal{Z}^* := \{ (\bbx, \bby) \in \reals^{n+m} : (\bbx, \bby) \text{ is a solution to Problem } \eqref{main_prob} \}
%\end{align}

\begin{assumption}
\label{ass:sol_set}
The solution set $\mathcal{Z}^*$ defined as
\begin{align}
\label{eq:defi_soln_set}
\mathcal{Z}^* := \{ [\bbx; \bby] \in \reals^{n+m} : (\bbx, \bby) \text{ is a saddle point of Problem } \eqref{main_prob} \},
\end{align}
is nonempty.
\end{assumption}
In the following sections, we present and analyze three different iterative algorithms for solving the saddle point problem introduced in~\eqref{main_prob}. The $k^{th}$ iterates of these algorithms are denoted by $(\bbx_k, \bby_k)$. 
%We denote $r_k = \| \bbx_k - \bbx^*\|^2 + \|\bby_k - \bby^* \|^2$, where $(\bbx^*, \bby^*)$ is the unique saddle point.
We denote the averaged (ergodic) iterates by $\hat{\bbx}_k, \hat{\bby}_k$, defined as follows:
\begin{align}\label{avg_iterate}
\hat{\bbx}_k = \frac{1}{k} \sum_{i = 1}^{k} \bbx_i , \qquad 
\hat{\bby}_k = \frac{1}{k} \sum_{i = 1}^{k} \bby_i .
\end{align}
%We define $(\bbx^*, \bby^*)$ as follows:
%\begin{align}
%\label{def_soln_points}
%(\bbx^*, \bby^*) = \argmin_{(\bbx, \bby) \in \mathcal{Z}^* } \{ \|\bbx_0 - \bbx\|^2 + \| \bby_0 - \bby \|^2 \}
%\end{align}
%i.e., the point in the solution set $\mathcal{Z}^*$ closest to the initial point $(\bbx_0, \bby_0)$. Note that $(\bbx^*, \bby^*) $ is unique since $\mathcal{Z}^*$ is closed and convex (Assumption \ref{ass:sol_set}). \\ \\

In our convergence analysis, we use a variational inequality approach in which we define the vector $\bbz = [\bbx; \bby]\in \reals^{n+m}$ as our decision variable and define the operator  $F: \reals^{m+n}\to \reals^{m+n}$ as
\begin{align}\label{eq:operator_def}
F(\bbz) = [\nabla_{\bbx} f(\bbx, \bby) ;  - \nabla_{\bby} f(\bbx, \bby)  ].
\end{align}
%%%

In the following lemma we characterize the properties of operator $F$ in \eqref{eq:operator_def} when the conditions in Assumptions \ref{ass:convex_concave} and \ref{ass:scsc} are satisfied. We would like to emphasize that the following lemma is well-known -- see, e.g., \cite{nemirovski2004prox} -- and we state it for completeness.
\begin{lemma}
\label{lemma:operator_monotone}
Let $F(\cdot)$ be defined as in Equation \eqref{eq:operator_def}. Suppose Assumptions \ref{ass:convex_concave} and \ref{ass:scsc} hold. Then \\
(a) $F$ is a monotone operator,  i.e., for any $\bbz_1,\bbz_2\in\reals^{m+n}$, we have
\begin{align*}
\langle F(\bbz_1) - F(\bbz_2) , \bbz_1 - \bbz_2 \rangle \geq 0.
\end{align*}
(b) $F$ is an $L$-Lipschitz continuous operator, i.e., for any $\bbz_1,\bbz_2\in\reals^{m+n}$, we have
\begin{align*}
\| F(\bbz_1) - F(\bbz_2)\|  \leq L \| \bbz_1 - \bbz_2 \|.
\end{align*}
(c) For all $\bbz^* \in \mathcal{Z}^*$, we have $F(\bbz^*) = 0$.
\end{lemma}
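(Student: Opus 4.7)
All three claims follow from Assumptions \ref{ass:convex_concave} and \ref{ass:scsc} via the standard first-order characterizations of convex/concave differentiable functions. My plan is to handle (a) by a four-term cancellation that uses convexity in $\bbx$ and concavity in $\bby$ in parallel, (b) by a componentwise triangle inequality, and (c) by first-order optimality at an unconstrained saddle point.

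For part (a), writing $\bbz_i = [\bbx_i;\bby_i]$ for $i=1,2$, I plan to split
\[
\langle F(\bbz_1) - F(\bbz_2), \bbz_1 - \bbz_2\rangle = \langle \nabla_\bbx f(\bbx_1,\bby_1) - \nabla_\bbx f(\bbx_2,\bby_2),\bbx_1-\bbx_2\rangle - \langle \nabla_\bby f(\bbx_1,\bby_1) - \nabla_\bby f(\bbx_2,\bby_2),\bby_1-\bby_2\rangle
\]
and bound the two inner products separately. For the first inner product I would apply Definition \ref{def:strong_convex} twice to $f(\cdot,\bby_1)$ (between $\bbx_1$ and $\bbx_2$) and twice to $f(\cdot,\bby_2)$, then add the four gradient inequalities to obtain the lower bound $f(\bbx_1,\bby_1)+f(\bbx_2,\bby_2)-f(\bbx_1,\bby_2)-f(\bbx_2,\bby_1)$. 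Repeating the same idea using concavity of $f(\bbx_1,\cdot)$ and $f(\bbx_2,\cdot)$ produces a lower bound on the \emph{negated} second inner product whose right-hand side is exactly the negative of the one above. Summing these two bounds cancels the mixed function values and delivers the required inequality $\geq 0$.

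Part (b) is a routine triangle-inequality calculation: inserting the intermediate points $(\bbx_2,\bby_1)$ and $(\bbx_1,\bby_2)$ and invoking the four componentwise Lipschitz bounds of Assumption \ref{ass:scsc} gives
\[
\|\nabla_\bbx f(\bbx_1,\bby_1)-\nabla_\bbx f(\bbx_2,\bby_2)\| \leq L\|\bbx_1-\bbx_2\|+L\|\bby_1-\bby_2\|,
\]
with the analogous bound for the $\bby$-block; squaring, summing, and applying Cauchy-Schwarz yields $\|F(\bbz_1)-F(\bbz_2)\| \leq CL\|\bbz_1-\bbz_2\|$ for a small absolute constant $C$ that can be absorbed into $L$. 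Part (c) is a one-line observation: since Problem \eqref{main_prob} is unconstrained, at any saddle point $(\bbx^*,\bby^*)$ the convex map $\bbx\mapsto f(\bbx,\bby^*)$ is minimized at $\bbx^*$ on all of $\reals^m$, so by Assumption \ref{ass:convex_concave} first-order optimality gives $\nabla_\bbx f(\bbx^*,\bby^*) = 0$; symmetrically $\nabla_\bby f(\bbx^*,\bby^*) = 0$, hence $F(\bbz^*) = 0$. I do not anticipate a genuine obstacle here; the only piece that requires care is the four-term bookkeeping in (a), where one must check that the cross function values $f(\bbx_1,\bby_2)$ and $f(\bbx_2,\bby_1)$ appear with opposite signs in the two halves so that the sum collapses cleanly to zero.
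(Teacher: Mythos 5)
The paper itself gives no proof of this lemma --- it is stated ``for completeness'' with a citation to \cite{nemirovski2004prox} --- so there is no in-paper argument to compare against. Your proofs of parts (a) and (c) are correct and are the standard ones: the four-term cancellation in (a) does collapse as you describe (the cross terms $f(\bbx_1,\bby_2)$ and $f(\bbx_2,\bby_1)$ enter the two halves with opposite signs), and (c) is indeed immediate from unconstrained first-order optimality of the convex map $\bbx\mapsto f(\bbx,\bby^*)$ and the concave map $\bby\mapsto f(\bbx^*,\bby)$.

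Part (b) contains a genuine gap, however. Your triangle-inequality computation yields $\|F(\bbz_1)-F(\bbz_2)\|\le 2L\|\bbz_1-\bbz_2\|$, and the factor $2$ cannot be ``absorbed into $L$'': Assumption \ref{ass:scsc} has already pinned $L$ down as $\max\{L_{xx},L_{xy},L_{yx},L_{yy}\}$, and this same $L$ reappears in the stepsize thresholds $\eta\le 1/(2L)$ and $\eta<1/L$ of the OGDA and EG analyses, so the constant is not cosmetic. Moreover, the factor is not merely an artifact of a loose proof --- the statement with constant exactly $L$ is not derivable from Assumption \ref{ass:scsc} alone. For instance, $f(x,y)=\tfrac{L}{2}x^2+Lxy-\tfrac{L}{2}y^2$ on $\reals\times\reals$ has all four componentwise constants equal to $L$, yet $F(z)=[Lx+Ly;\,-Lx+Ly]$ is Lipschitz with constant exactly $\sqrt{2}\,L>L$. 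The honest repair is either to take ``$F$ is $L$-Lipschitz'' itself as the standing assumption (as is done in the variational-inequality literature the paper cites), or to prove and carry the constant $2L$ (or the sharper $\sqrt{2}L$) through the downstream stepsize conditions. Flagging the constant as something to be absorbed, as you do, leaves the claimed inequality unproved as stated.
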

%\begin{proof}
%See Appendix
%\end{proof}

According to Lemma~\ref{lemma:operator_monotone}, when $f$ is convex-concave and smooth, the operator $F$ defined in \eqref{eq:operator_def} is monotone and Lipschitz. The third result in Lemma~\ref{lemma:operator_monotone} shows that any saddle point of problem~\eqref{main_prob} satisfies the first-order optimality condition, i.e $\forall \ (\bbx^*, \bby^*) \in \mathcal{Z}^*$, we have:
\begin{align}
\nabla_{\bbx}f(\bbx^*, \bby^*) = 0 \qquad \nabla_{\bby}f(\bbx^*, \bby^*)  
\end{align}

Before presenting our main results, we state the following well known result (see for example \cite{nemirovski2004prox}) which will be used later in the analysis of OGDA and EG. We present the proof here for completeness.

%%%%%%%%%%%%%%%%%%%%%%%%%%%%%%%
%%%%%%%%%%%%%%%%%%%%%%%%%%%%%%%
%%%%%    L  E  M  M  A    %%%%%%%%%%%%%%%%%
%%%%%%%%%%%%%%%%%%%%%%%%%%%%%%%
%%%%%%%%%%%%%%%%%%%%%%%%%%%%%%%
\begin{proposition}
\label{lemma:iter_erg_ineq}
Recall the definition of the operator $F(\cdot)$ in \eqref{eq:operator_def} and the points $\hbx_k,\hby_k$ in \eqref{avg_iterate}.  Suppose Assumptions \ref{ass:convex_concave} and \ref{ass:sol_set}  hold. Then for any $\bbz = [\bbx; \bby] \in \reals^{m+n}$, we have
\begin{align}
f(\hat{\bbx}_N, \bby) - f(\bbx, \hat{\bby}_N) \leq \frac{1}{N} \sum_{k = 1}^{N} F(\bbz_{k}) ^{\top}  (\bbz_{k} - \bbz)
\end{align}
\end{proposition}

%%%%%%%%%%%%%%%%%%%%%%%%%%%%%%%
%%%%%%%%%%%%%%%%%%%%%%%%%%%%%%%
%%%%%    PROOF    %%%%%%%%%%%%%%%%%
%%%%%%%%%%%%%%%%%%%%%%%%%%%%%%%
%%%%%%%%%%%%%%%%%%%%%%%%%%%%%%%
\begin{proof}
Using the definition of the operator $F$, we can write
\begin{align}
\frac{1}{N} \sum_{k = 1}^{N} F(\bbz_{k}) ^{\top}  (\bbz_{k} - \bbz) &= \frac{1}{N} \sum_{k = 1}^{N} [ \nabla_{\bbx}f(\bbx_k, \bby_k)^{\top} (\bbx_{k} - \bbx ) + \nabla_{\bby}f(\bbx_k, \bby_k)^{\top} (\bby - \bby_k ) ] \nonumber \\
&\geq \frac{1}{N} \sum_{k = 1}^{N} [f(\bbx_k, \bby_k) - f(\bbx, \bby_k) + f(\bbx_k, \bby) - f(\bbx_k, \bby_k) ] \nonumber \\
&= \frac{1}{N} \sum_{k = 1}^{N} [ f(\bbx_k, \bby)  - f(\bbx, \bby_k) ], \label{eq:bw_lemma_1}
\end{align}
where the inequality holds due to the fact that $f$ is convex-concave. Using convexity of $f$ with respect to $\bbx$ and concavity of $f$ with respect to $\bby$, we have
\begin{align}
\frac{1}{N} \sum_{k = 1}^{N} f(\bbx_k, \bby) \geq   f(\hat{\bbx}_N, \bby), \qquad 
 \frac{1}{N} \sum_{k = 1}^{N} f(\bbx, \bby_k ) \leq f(\bbx, \hat{\bby}_N).
 \label{eq:bw_lemma_2}
\end{align}
Combining inequalities \eqref{eq:bw_lemma_1} and \eqref{eq:bw_lemma_2} yields
\begin{align}
\frac{1}{N} \sum_{k = 1}^{N} F(\bbz_{k}) ^{\top}  (\bbz_{k} - \bbz) &\geq f(\hat{\bbx}_N, \bby) - f(\bbx, \hat{\bby}_N), \nonumber
\end{align}
completing the proof.
\end{proof}

%!TEX root = main.tex

\section{Proximal point method with error} One of the classical algorithms studied for solving the saddle point problem in \eqref{main_prob} is the Proximal Point (PP) method, introduced in \cite{martinet1970breve} and studied in \cite{rockafellar1976monotone}. The PP method  generates the iterate $\{ \bbx_{k+1}, \bby_{k+1} \}$ which is defined as the unique solution to the saddle point problem\footnote{Again $ \{\bbx_{k+1},\bby_{k+1}\} $ is  unique since the objective function of problem \eqref{eq:prox_point_min_max_update_0} is strongly convex in $\bbx$ and strongly concave in $\bby$}
%%%%
%%
\begin{align}\label{eq:prox_point_min_max_update_0}
\min_{\bbx \in \reals^m} \max_{\bby \in \reals^n} \left\{ f(\bbx,\bby) + \frac{1}{2\eta}\|\bbx-\bbx_k\|^2- \frac{1}{2\eta}\|\bby-\bby_k\|^2\right\}.
\end{align}
It can be verified that if the pair $\{\bbx_{k+1},\bby_{k+1}\} $ is the solution of problem \eqref{eq:prox_point_min_max_update_0}, then $\bbx_{k+1}$ and $\bby_{k+1} $ satisfy 
\begin{align}
\bbx_{k+1} &= \argmin_{\bbx \in \reals^m}\left\{ f(\bbx,\bby_{k+1}) + \frac{1}{2\eta}\|\bbx-\bbx_k\|^2\right\},\label{eq:prox_point_min_max_update_1}\\
\bby_{k+1} &= \argmax_{\bby \in \reals^n}\left\{ f(\bbx_{k+1},\bby) - \frac{1}{2\eta}\|\bby-\bby_k\|^2\right\}. \label{eq:prox_point_min_max_update_2}
\end{align}
%%%
Using the optimality conditions of the updates in \eqref{eq:prox_point_min_max_update_1} and \eqref{eq:prox_point_min_max_update_2} (which are necessary and sufficient since the problems in \eqref{eq:prox_point_min_max_update_1} and \eqref{eq:prox_point_min_max_update_2} are strongly convex and strongly concave, respectively), the update of the PP method for the saddle point problem in \eqref{main_prob} can be written as 
%%
%\begin{align}
%\bbx_{k+1} &=  \bbx_k - \eta \nabla_{\bbx} f(\bbx_{k+1},\bby_{k+1}),\label{eq:prox_point_min_max_update_final_1}\\
%\bby_{k+1} &=  \bby_k + \eta \nabla_{\bby} f(\bbx_{k+1},\bby_{k+1}).\label{eq:prox_point_min_max_update_final_2}
%\end{align}
%%%%
%

%%%%
\begin{align}\label{eq:proximal_point_update}
\bbx_{k+1} &= \bbx_{k} - \eta \nabla_{\bbx} f (\bbx_{k+1},  \bby_{k+1}), \nonumber \\
\bby_{k+1} &= \bby_k + \eta \nabla_{\bby} f (\bbx_{k+1},  \bby_{k+1}). 
\end{align}
%%%%
It is well-known that the proximal point method achieves a sublinear rate of $\mathcal{O}(1/k)$ when $k$ is the number of iterations for convex minimization and for solving monotone variational inequalities (see \cite{guler1991convergence,guler1992new, bruck1977weak, teboulle1997convergence, nemirovski2004prox}). Note that \cite{nemirovski2004prox} in fact analyzed the conceptual mirror prox (the proximal point method) as a building block to analyze the mirror-prox algorithm. For completeness, we present the convergence rate of the proximal point method for convex-concave saddle point problems in the following theorem (see Appendix \ref{appx:proof_of_thm_pp} for the proof).

%%%%%%%%%%%%%%%%%%%%%%%%%%%%%%%%
%%%%%%%%%%%%%%%%%%%%%%%%%%%%%%%%
%%%%%      T  H  E  O  R  E  M      %%%%%%%%%%%%%%
%%%%%%%%%%%%%%%%%%%%%%%%%%%%%%%%
%%%%%%%%%%%%%%%%%%%%%%%%%%%%%%%%
%%%
\begin{theorem}\label{theorem:prox_point_rate}
Suppose Assumption \ref{ass:convex_concave} holds. Let $\{ \bbx_k, \bby_k \}$ be the iterates generated by the updates in \eqref{eq:proximal_point_update}. Consider the definition of the averaged iterates $\hbx_k,\hby_k$ in \eqref{avg_iterate}. Then for all $k \geq 1$, we have
%Consider the proximal point method introduced in \eqref{eq:proximal_point_update}. Further, recall the definition of the average iterates $\hbx_k,\hby_k$ in \eqref{avg_iterate}. If the conditions in Assumption~\ref{ass:convex_concave} are satisfied, then the average iterates $\{\hat{\bbx}_k,\hat{\bby}_k\}$ generated by the proximal point method satisfy
\begin{align}
 \left| f(\hat{\bbx}_k,  \hat{\bby}_k) -  f(\bbx^*,   \bby^*) \right| \leq \frac{ \| \bbx_0 - \bbx^* \|^2 + \| \bby_0 - \bby^* \|^2 }{\eta k}.
\end{align}
\end{theorem}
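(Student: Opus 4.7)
The plan is to combine Lemma~\ref{lemma:iter_erg_ineq} with the structural identity coming from the proximal point update~\eqref{eq:proximal_point_update} and then close the argument with a simple telescoping sum. The main payoff of viewing PP through the operator $F$ is that the implicit gradient $F(\bbz_{k+1})$ is exactly $(\bbz_k - \bbz_{k+1})/\eta$, which turns the weighted sum in Lemma~\ref{lemma:iter_erg_ineq} into consecutive squared-distance differences.

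First, I would stack the two updates in \eqref{eq:proximal_point_update} into the single vector recursion $\bbz_{k+1} = \bbz_k - \eta F(\bbz_{k+1})$, so that
\begin{align*}
F(\bbz_k) \;=\; \tfrac{1}{\eta}\bigl(\bbz_{k-1} - \bbz_k\bigr), \qquad k \geq 1.
\end{align*}
Plugging this identity into Lemma~\ref{lemma:iter_erg_ineq} with the choice $\bbz = \bbz^{*} := (\bbx^{*},\bby^{*})$ gives
\begin{align*}
f(\hat\bbx_N,\bby^{*}) - f(\bbx^{*},\hat\bby_N) \;\leq\; \tfrac{1}{\eta N}\sum_{k=1}^{N}(\bbz_{k-1}-\bbz_k)^{\top}(\bbz_k - \bbz^{*}).
\end{align*}

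Next I would telescope using the three-point identity $2(\bba-\bbb)^{\top}(\bbb-\bbc) = \|\bba-\bbc\|^2 - \|\bba-\bbb\|^2 - \|\bbb-\bbc\|^2$ applied with $\bba=\bbz_{k-1}$, $\bbb=\bbz_k$, $\bbc=\bbz^{*}$. The $\|\bbz_{k-1}-\bbz^{*}\|^2 - \|\bbz_k-\bbz^{*}\|^2$ differences telescope, the $-\|\bbz_{k-1}-\bbz_k\|^2$ terms contribute nonpositively, and the residual $-\|\bbz_N-\bbz^{*}\|^2$ is nonpositive, yielding
\begin{align*}
f(\hat\bbx_N,\bby^{*}) - f(\bbx^{*},\hat\bby_N) \;\leq\; \tfrac{\|\bbz_0 - \bbz^{*}\|^2}{2\eta N} \;=\; \tfrac{\|\bbx_0-\bbx^{*}\|^2 + \|\bby_0-\bby^{*}\|^2}{2\eta N}.
\end{align*}
Finally, the saddle-point property gives $f(\bbx^{*},\hat\bby_N) \leq f(\bbx^{*},\bby^{*}) \leq f(\hat\bbx_N,\bby^{*})$, so both nonnegative quantities $f(\hat\bbx_N,\bby^{*}) - f(\bbx^{*},\bby^{*})$ and $f(\bbx^{*},\bby^{*}) - f(\bbx^{*},\hat\bby_N)$ are individually dominated by the gap just established, and I would use these to conclude the $|\cdot|$ bound stated in the theorem; the factor-of-two slack between the tight constant $2\eta k$ in the denominator and the stated $\eta k$ is what leaves room for the absolute value.

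The only nonroutine step is the last one: strictly speaking, the gap bound controls $f$ only when one of the arguments is forced to be $\bbx^{*}$ or $\bby^{*}$, whereas $f(\hat\bbx_k,\hat\bby_k)$ is not a priori sandwiched between $f(\bbx^{*},\hat\bby_k)$ and $f(\hat\bbx_k,\bby^{*})$. Transferring the primal-dual gap bound into a genuine two-sided bound on $|f(\hat\bbx_k,\hat\bby_k) - f(\bbx^{*},\bby^{*})|$, leveraging the slack in the theorem's constant, is the delicate part of the argument; everything else is bookkeeping around a single telescoping identity.
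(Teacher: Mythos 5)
Your main estimate follows the paper's route exactly: the identity $F(\bbz_{k+1})=(\bbz_k-\bbz_{k+1})/\eta$, the three-point expansion, and the telescoping sum are precisely Equation \eqref{eq:to_add} of the paper's appendix summed over $k$, and your bound $f(\hat{\bbx}_N,\bby^*)-f(\bbx^*,\hat{\bby}_N)\le \|\bbz_0-\bbz^*\|^2/(2\eta N)$ is correct (the paper even drops the factor $1/(2\eta)$ in its intermediate display, which is a typo on its side). So up to that point the proposal is sound and matches the paper.

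The step you flag as delicate is, however, a genuine gap, and you leave it open rather than closing it: the quantity $f(\hat{\bbx}_N,\hat{\bby}_N)$ is indeed not sandwiched between $f(\bbx^*,\hat{\bby}_N)$ and $f(\hat{\bbx}_N,\bby^*)$, so the gap bound at $\bbz=\bbz^*$ alone does not yield the stated conclusion, and the ``factor-of-two slack'' you invoke is not by itself a mechanism for converting one into the other. (To be fair, the paper's own proof simply asserts this last step.) The way to close it is to exploit that Lemma \ref{lemma:iter_erg_ineq} holds for \emph{every} $\bbz$, including choices depending on the iterates: taking $\bbz=(\bbx^*,\hat{\bby}_N)$ gives $f(\hat{\bbx}_N,\hat{\bby}_N)-f^{\star}\le f(\hat{\bbx}_N,\hat{\bby}_N)-f(\bbx^*,\hat{\bby}_N)\le \bigl(\|\bbx_0-\bbx^*\|^2+\|\bby_0-\hat{\bby}_N\|^2\bigr)/(2\eta N)$ since $f(\bbx^*,\hat{\bby}_N)\le f^{\star}$, and symmetrically $\bbz=(\hat{\bbx}_N,\bby^*)$ gives the lower bound. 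One then controls $\|\bby_0-\hat{\bby}_N\|$ using the monotone decrease $\|\bbz_{k+1}-\bbz^*\|\le\|\bbz_k-\bbz^*\|$ (which your telescoping already produces) together with convexity of the norm, so that $\|\hat{\bby}_N-\bby^*\|\le\|\bbz_0-\bbz^*\|$. Carrying this out yields $\|\bbx_0-\bbx^*\|^2+\|\bby_0-\hat{\bby}_N\|^2\le 3D$ with $D=\|\bbx_0-\bbx^*\|^2+\|\bby_0-\bby^*\|^2$, hence a bound of order $3D/(2\eta N)$ --- the right rate, but a constant slightly worse than the $D/(\eta N)$ claimed in the theorem, so the slack you counted on does not fully cover this step. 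You should either carry out this adaptive-$\bbz$ argument explicitly (accepting the larger constant) or restate the conclusion in terms of the primal--dual gap $f(\hat{\bbx}_N,\bby^*)-f(\bbx^*,\hat{\bby}_N)$, which your telescoping does prove with the sharp constant.
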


The result in Theorem~\ref{theorem:prox_point_rate} shows that by following the update of proximal point method the gap between the function value for the averaged iterates $(\hat{\bbx}_k,  \hat{\bby}_k)$ and the function value for a saddle point $(\bbx^*,   \bby^*)$ of the problem \eqref{main_prob} approaches zero at a sublinear rate of $\mathcal{O}(1/k)$.

Our goal is to provide similar convergence rate estimates for OGDA and EG using the fact that these two methods can be interpreted as approximate versions of the proximal point method. To do so, let us first rewrite the update of the proximal point method given in  \eqref{eq:proximal_point_update} as 
%%%
\begin{align}
\bbz_{k+1} = \bbz_k - \eta F(\bbz_{k+1}),
\end{align}
%%%
where $\bbz=[\bbx; \bby] \in \mathbb{R}^{m+n}$ and the operator $F$ is defined in \eqref{eq:operator_def}. In the following proposition, we establish a relation for the iterates of a proximal point method with error. This relation will be used later for our analysis of OGDA and EG methods.
%%%%
%\begin{align}
%\bbz_{k+1} = \bbz_k - \eta F(\bbz_{k+1}) + \bbvarepsilon_k.
%\label{eq:def_pp_error_update}
%\end{align}
%%%%%
%
%
%We have the following lemma which characterizes the iterates of the update \eqref{eq:def_pp_error_update}.

%%%%%%%%%%%%%%%%%%%%%%%%%%%%%%%%%
%%%%%%%%%%%%%%%%%%%%%%%%%%%%%%%%%
%%%%%%    L  E  M  M  A    %%%%%%%%%%%%%%%%%%
%%%%%%%%%%%%%%%%%%%%%%%%%%%%%%%%%
%%%%%%%%%%%%%%%%%%%%%%%%%%%%%%%%%
\begin{proposition}\label{lemma_general_pp_approx}
Consider the sequence of iterates $\{\bbz_k\}\in \reals^{n+m}$ generated by the following update
%%%
\begin{align}\label{VI_pp_update}
\bbz_{k+1} = \bbz_k - \eta F(\bbz_{k+1}) + \bbvarepsilon_k,
\end{align}
%%%
where $F:\reals^{n+m}\to\reals^{n+m}$ is a monotone and Lipschitz continuous operator, $\bbvarepsilon_k\in \reals^{n+m}$ is an arbitrary vector, and $\eta$ is a positive constant. Then for any $\bbz \in \reals^{n+m}$ and for each $k\geq1$ we have
%%%%
\begin{align}\label{proof_mid_lemma_600}
&F(\bbz_{k+1}) ^{\top} (\bbz_{k+1} - \bbz) \nonumber \\
& \quad
= \frac{1}{2\eta}  \|\bbz_k - \bbz\|^2 - \frac{1}{2\eta}  \|\bbz_{k+1} - \bbz\|^2 -\frac{1}{2\eta}  \|\bbz_{k+1} - \bbz_k\|^2 + \frac{1}{\eta} {\bbvarepsilon_k}^{\top} (\bbz_{k+1} - \bbz).
\end{align}
%%%
\end{proposition}

%%%%%%%%%%%%%%%%%%%%%%%%%%%%%%%%%
%%%%%%%%%%%%%%%%%%%%%%%%%%%%%%%%%
%%%%%%    P  R  O  O  F    %%%%%%%%%%%%%%%%%%
%%%%%%%%%%%%%%%%%%%%%%%%%%%%%%%%%
%%%%%%%%%%%%%%%%%%%%%%%%%%%%%%%%%
\begin{proof}
According to the update in \eqref{VI_pp_update}, we can show that for any $\bbz\in \reals^{m+n}$ we have 
%%%
\begin{align}\label{proof_mid_lemma_100}
\|\bbz_{k+1} - \bbz\|^2 = \|\bbz_k - \bbz\|^2 -  2\eta (\bbz_k - \bbz)^\top  F(\bbz_{k+1})  & + \eta^2 \|F(\bbz_{k+1})  \|^2 + \|\bbvarepsilon_k\|^2 \nonumber \\
&+ 2 {\bbvarepsilon_k}^\top (\bbz_k - \bbz - \eta F(\bbz_{k+1}) ).
\end{align}
%%%
We add and subtract the inner product $2\eta\bbz_{k+1}^\top  F(\bbz_{k+1})$ to the right hand side and regroup the terms to obtain
%%%
\begin{align}\label{proof_mid_lemma_200}
\|\bbz_{k+1} - \bbz\|^2 
&= \|\bbz_k - \bbz\|^2 - 2\eta (\bbz_{k+1} - \bbz)^{\top} F(\bbz_{k+1})- 2\eta (\bbx_{k} - \bbx_{k+1})^{\top}  F(\bbz_{k+1})\nonumber \\
&\quad + \eta^2 \| F(\bbz_{k+1}) \|^2 + \|\bbvarepsilon_k\|^2   + 2 {\bbvarepsilon_k}^T (\bbz_k - \bbz - \eta F(\bbz_{k+1}) ) .
\end{align}
%%%
Replacing $F(\bbz_{k+1})$ with $(1/\eta)(-\bbz_{k+1}+\bbz_k+\bbvarepsilon_k)$, we obtain
%%%
\begin{align}
\label{proof_mid_lemma_500}
&\|\bbz_{k+1} - \bbz\|^2 \nonumber\\
&= \|\bbz_k - \bbz\|^2 - 2\eta (\bbz_{k+1} - \bbz)^{\top} F(\bbz_{k+1})+ 2(\bbz_{k} - \bbz_{k+1})^\top (\bbz_{k+1} - \bbz_k -\bbvarepsilon_k) \nonumber \\
& \qquad + \|\bbz_{k+1} - \bbz_k -\bbvarepsilon_k \|^2  + \|\bbvarepsilon_k\|^2 + 2 {\bbvarepsilon_k}^T (\bbz_{k+1} - \bbz - \bbvarepsilon_k ) \nonumber \\
&=  \|\bbz_k - \bbz\|^2 - 2\eta (\bbz_{k+1} - \bbz)^{\top} F(\bbz_{k+1})   - \|\bbz_{k+1} - \bbz_{k}\|^2 + 2 {\bbvarepsilon_k}^T (\bbz_{k+1} - \bbz).
\end{align}
On rearranging the terms, we obtain the following inequality:
%%%
\begin{align}
&F(\bbz_{k+1}) ^{\top} (\bbz_{k+1} - \bbz) 
\nonumber \\
& \quad= \frac{1}{2\eta}  \|\bbz_k - \bbz\|^2 - \frac{1}{2\eta}   \|\bbz_{k+1} - \bbz\|^2 -\frac{1}{2\eta}  \|\bbz_{k+1} - \bbz_k\|^2 + \frac{1}{\eta} {\bbvarepsilon_k}^T (\bbz_{k+1} - \bbz),
\end{align}
%%%
and the proof is complete.
\end{proof}

\section{Optimistic Gradient Descent Ascent}
In this section, we focus on analyzing the performance of optimistic gradient descent ascent (OGDA) for finding a saddle point of a general smooth convex-concave function. It has been shown that the OGDA method achieves the same iteration complexity as the proximal point method for both strongly convex-strongly concave and bilinear problems; see \cite{DBLP:journals/corr/abs-1802-06132}, \cite{gidel2018variational}, \cite{mokhtari2019unified}. However, its iteration complexity for a general smooth convex-concave case has not been established to the best of our knowledge. In this section, 
we show that the function value of the averaged iterate generated by the OGDA method converges to the function value at a saddle point at a rate of $\mathcal{O}(1/k)$, 
%we aim to close this gap and derive a convergence rate of $\mathcal{O}(1/k)$ for the OGDA method, 
which matches the convergence rate of the proximal point method shown in Theorem \ref{theorem:prox_point_rate}.

Given a stepsize $\eta>0$, the OGDA method updates the iterates $\bbx_k$ and $\bby_k$ for each $k \geq 0$ as
%for the iterates $\bbx_k$ and $\bby_k$ can be written as
%%%
\begin{align}\label{OGDA_update}
\bbx_{k+1} &= \bbx_k - 2\eta \nabla_{\bbx} f \left( \bbx_k , \bby_k  \right) + \eta \nabla_{\bbx} f \left( \bbx_{k-1} , \bby_{k-1}  \right),\nonumber\\ 
\bby_{k+1} &= \bby_k + 2\eta \nabla_{\bby} f \left( \bbx_k , \bby_k  \right) - \eta \nabla_{\bby} f \left( \bbx_{k-1} , \bby_{k-1}  \right)
\end{align}
%%%
with the initial conditions $\bbx_0 = \bbx_{-1}$ and $\bby_0 = \bby_{-1}$. The main difference between the updates of OGDA in \eqref{OGDA_update} and the gradient descent ascent (GDA) method is in the additional ``momentum" terms $-\eta (\nabla_{\bbx} f \left( \bbx_k , \bby_k  \right)-\nabla_{\bbx} f \left( \bbx_{k-1} , \bby_{k-1}  \right))$ and $\eta (\nabla_{\bby} f \left( \bbx_k , \bby_k  \right)-\nabla_{\bby} f \left( \bbx_{k-1} , \bby_{k-1}  \right))$. This additional term makes the update of OGDA a better approximation to the update of the proximal point method compared to the update of the GDA; for more details we refer readers to Proposition 1 in \cite{mokhtari2019unified}.

%To do that we first prove a general lemma for any algorithm that can be interpreted as an approximation of the proximal point method. 
%Note that if we define the vectors $\bbz = [\bbx; \bby]\in \reals^{n+m}$ and $F(\bbz) = [\nabla_{\bbx} f(\bbx, \bby) ;  - \nabla_{\bby} f(\bbx, \bby)  ]\in \reals^{n+m}$, then the update of the proximal point method at iteration $k$ can be written as 
%$$
%\bbz_{k+1} = \bbz_k - \eta F(\bbz_{k+1}).
%$$
%We use this representation of the proximal point method to prove the following lemma.

%%%%%%%%%%%%%%%%%%%%%%%%%%%%%%%%%
%%%%%%%%%%%%%%%%%%%%%%%%%%%%%%%%%
%%%%%%     M  A  I  N     T  E  X  T     %%%%%%%%%%%%%
%%%%%%%%%%%%%%%%%%%%%%%%%%%%%%%%%
%%%%%%%%%%%%%%%%%%%%%%%%%%%%%%%%%
To establish the convergence rate of  OGDA for convex-concave problems, we first illustrate the connection between the updates of proximal point method and OGDA. Note that using the definitions of the vector  $\bbz = [\bbx; \bby]\in \reals^{n+m}$ and the operator $F(\bbz) = [\nabla_{\bbx} f(\bbx, \bby) ;  - \nabla_{\bby} f(\bbx, \bby)  ]\in \reals^{n+m}$, we can rewrite the update of the OGDA algorithm at iteration $k$ as 
%%%%
\begin{align}
\label{eq:OGDA_update_VI}
\bbz_{k+1} = \bbz_k - 2\eta F (\bbz_k) + \eta F (\bbz_{k-1}).
\end{align}
%%%
Considering this expression, we can also write the update of OGDA as an approximation of the proximal point update, i.e., 
%%%
\begin{align} \label{eq:PP_E_OGDA}
\bbz_{k+1} = \bbz_k - \eta F (\bbz_{k+1}) + \bbvarepsilon_k,
\end{align}
%%%
where the error vector $\bbvarepsilon_k$ is given by
%%%
\begin{align}\label{eq:error_vec_ogda}
\bbvarepsilon_k = \eta[(F(\bbz_{k+1}) - F(\bbz_k)) - (F(\bbz_{k}) - F(\bbz_{k-1}))] .
\end{align}
%%%
%Therefore, OGDA can be considered as an approxmation of the proximal point method with the error $\bbvarepsilon_k$ defined in \eqref{eq:error_vec_ogda}.

To derive the convergence rate of OGDA for the unconstrained problem in \eqref{main_prob}, we first use the result in Proposition~\ref{lemma_general_pp_approx} to derive a result for the specific case of OGDA updates. We then show that the iterates generated by the OGDA method remain in a bounded set. This is done in the following lemma (Note that boundedness of OGDA iterates can be deduced from \citep{popov1980modification}, whereas a result similar to Lemma \ref{lemma:ogda_bounded_set}(b) was shown in a recent independent paper by \cite{malitsky2018forward}).

%%%%%%%%%%%%%%%%%%%%%%%%%%%%%%%%%
%%%%%%%%%%%%%%%%%%%%%%%%%%%%%%%%%
%%%%%%    L  E  M  M  A    %%%%%%%%%%%%%%%%%%
%%%%%%%%%%%%%%%%%%%%%%%%%%%%%%%%%
%%%%%%%%%%%%%%%%%%%%%%%%%%%%%%%%%
\begin{lemma}\label{lemma:ogda_bounded_set}
Let $\{ \bbz_k \}$ be the iterates generated by the optimistic gradient descent ascent (OGDA) method introduced in \eqref{eq:OGDA_update_VI} with the initial conditions $\bbx_0 = \bbx_{-1}$ and $\bby_0 = \bby_{-1}$ (i.e. $\bbz_0 = \bbz_{-1}$). If  Assumptions~\ref{ass:convex_concave},~\ref{ass:scsc}, and \ref{ass:sol_set} hold and the stepsize $\eta$ satisfies the condition $0<\eta \leq \frac{1}{2L}$, then: \\
(a) The iterates $\{ \bbz_k \}$ satisfy the following relation:
\begin{align}\label{proof_so_far_3000_1}
&F(\bbz_{k+1})^{\top}(\bbz_{k+1} - \bbz) \nonumber\\
& \leq \frac{1}{2\eta} \| \bbz_{k} - \bbz\|^2 - \frac{1}{2\eta} \| \bbz_{k+1} - \bbz\|^2 - \frac{L}{2} \| \bbz_{k+1} - \bbz_{k} \|^2 +  \frac{L}{2} \| \bbz_{k} - \bbz_{k-1}\|^2 \nonumber \\
& \qquad +(F(\bbz_{k+1}) - F(\bbz_k))^{\top}(\bbz_{k+1} - \bbz) - (F(\bbz_{k}) - F(\bbz_{k-1}))^{\top}(\bbz_{k} - \bbz).
\end{align}

\noindent (b) The iterates $\{ \bbz_k \}$ stay within the compact set $\ccalD$ defined as 
\begin{equation}\label{set_D}
\ccalD:=\{(\bbx,\bby)\mid \| \bbx - \bbx^* \|^2 + \| \bby - \bby^* \|^2\leq 2\left( \| \bbx_0 - \bbx^* \|^2 + \| \bby_0 - \bby^* \|^2 \right) \},
\end{equation}
where $(\bbx^*,\bby^*) = \bbz^* \in \mathcal{Z}^*$ is a saddle point of the problem defined in \eqref{main_prob}.
\end{lemma}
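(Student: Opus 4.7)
The plan is to derive part~(a) by applying Lemma~\ref{lemma_general_pp_approx} to the ``proximal point with error'' form \eqref{eq:PP_E_OGDA}--\eqref{eq:error_vec_ogda} of the OGDA update, and then to obtain part~(b) by specializing to $\bbz = \bbz^\ast$, invoking the monotonicity of $F$, and constructing a Lyapunov-type quantity that is monotonically non-increasing along the iterates.

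For part~(a), Lemma~\ref{lemma_general_pp_approx} with $\bbvarepsilon_k = \eta[(F(\bbz_{k+1}) - F(\bbz_k)) - (F(\bbz_k) - F(\bbz_{k-1}))]$ immediately yields
\[
F(\bbz_{k+1})^\top (\bbz_{k+1}-\bbz)
= \tfrac{1}{2\eta}\|\bbz_k-\bbz\|^2 - \tfrac{1}{2\eta}\|\bbz_{k+1}-\bbz\|^2 - \tfrac{1}{2\eta}\|\bbz_{k+1}-\bbz_k\|^2 + \tfrac{1}{\eta}\bbvarepsilon_k^\top(\bbz_{k+1}-\bbz).
\]
Expanding $\tfrac{1}{\eta}\bbvarepsilon_k^\top(\bbz_{k+1}-\bbz)$ and writing the second piece as $(F(\bbz_k)-F(\bbz_{k-1}))^\top(\bbz_{k+1}-\bbz) = (F(\bbz_k)-F(\bbz_{k-1}))^\top(\bbz_{k+1}-\bbz_k) + (F(\bbz_k)-F(\bbz_{k-1}))^\top(\bbz_k-\bbz)$ isolates a ``telescoping'' term plus a cross term. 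For the cross term I use Cauchy--Schwarz, the $L$-Lipschitz property of $F$ (Lemma~\ref{lemma:operator_monotone}(b)), and Young's inequality to bound
\[
\bigl|(F(\bbz_k)-F(\bbz_{k-1}))^\top(\bbz_{k+1}-\bbz_k)\bigr|
\leq \tfrac{L}{2}\|\bbz_{k+1}-\bbz_k\|^2 + \tfrac{L}{2}\|\bbz_k-\bbz_{k-1}\|^2.
\]
Combining and using $\eta\leq 1/(2L)$ to absorb $\tfrac{L}{2}\|\bbz_{k+1}-\bbz_k\|^2$ into $-\tfrac{1}{2\eta}\|\bbz_{k+1}-\bbz_k\|^2$ (leaving at most $-\tfrac{L}{2}\|\bbz_{k+1}-\bbz_k\|^2$) yields~\eqref{proof_so_far_3000_1}.

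For part~(b), set $\bbz = \bbz^\ast \in \ccalZ^\ast$ in~\eqref{proof_so_far_3000_1}. By Lemma~\ref{lemma:operator_monotone}(c), $F(\bbz^\ast)=0$, so monotonicity gives $F(\bbz_{k+1})^\top(\bbz_{k+1}-\bbz^\ast) = (F(\bbz_{k+1})-F(\bbz^\ast))^\top(\bbz_{k+1}-\bbz^\ast) \geq 0$. Rearranging then produces $V_{k+1} \leq V_k$ for the potential
\[
V_k := \tfrac{1}{2\eta}\|\bbz_k-\bbz^\ast\|^2 + \tfrac{L}{2}\|\bbz_k-\bbz_{k-1}\|^2 - (F(\bbz_k)-F(\bbz_{k-1}))^\top(\bbz_k-\bbz^\ast).
\]
The initial condition $\bbz_0 = \bbz_{-1}$ makes the last two summands of $V_0$ vanish, giving $V_0 = \tfrac{1}{2\eta}\|\bbz_0-\bbz^\ast\|^2$.

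The only subtle step, and the main obstacle, is to extract a useful lower bound on $V_k$ from its definition, since the inner-product term is not sign-definite. I use Cauchy--Schwarz plus the Lipschitz property to bound the inner product by $L\|\bbz_k-\bbz_{k-1}\|\|\bbz_k-\bbz^\ast\|$, and then Young's inequality to get $\tfrac{L}{2}\|\bbz_k-\bbz_{k-1}\|^2 + \tfrac{L}{2}\|\bbz_k-\bbz^\ast\|^2$. The $\tfrac{L}{2}\|\bbz_k-\bbz_{k-1}\|^2$ cancels exactly, and what remains is
\[
V_k \geq \Bigl(\tfrac{1}{2\eta}-\tfrac{L}{2}\Bigr)\|\bbz_k-\bbz^\ast\|^2 \geq \tfrac{1}{4\eta}\|\bbz_k-\bbz^\ast\|^2,
\]
where the last step again uses $\eta \leq 1/(2L)$. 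Chaining $\tfrac{1}{4\eta}\|\bbz_k-\bbz^\ast\|^2 \leq V_k \leq V_0 = \tfrac{1}{2\eta}\|\bbz_0-\bbz^\ast\|^2$ gives $\|\bbz_k-\bbz^\ast\|^2 \leq 2\|\bbz_0-\bbz^\ast\|^2$, which is exactly the membership condition for $\ccalD$ in~\eqref{set_D}.
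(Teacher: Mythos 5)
Your proposal is correct and follows essentially the same route as the paper: part (a) is identical (apply Lemma~\ref{lemma_general_pp_approx} with the OGDA error vector, split off the cross term $(F(\bbz_k)-F(\bbz_{k-1}))^\top(\bbz_k-\bbz_{k+1})$, and bound it via Lipschitz continuity and Young's inequality, absorbing the result using $\eta\leq 1/(2L)$). For part (b) your one-step Lyapunov descent $V_{k+1}\leq V_k$ is just a repackaging of the paper's telescoped sum of \eqref{proof_so_far_3000_1} at $\bbz=\bbz^*$ --- the same monotonicity step, the same Young's-inequality lower bound on the residual inner product, and the same final constant $\tfrac{1}{2\eta}-\tfrac{L}{2}=\tfrac{1-\eta L}{2\eta}\geq\tfrac{1}{4\eta}$ --- so the two arguments coincide.
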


%%%%%%%%%%%%%%%%%%%%%%%%%%%%%%%%%
%%%%%%%%%%%%%%%%%%%%%%%%%%%%%%%%%
%%%%%%    P  R  O  O  F    %%%%%%%%%%%%%%%%%%
%%%%%%%%%%%%%%%%%%%%%%%%%%%%%%%%%
%%%%%%%%%%%%%%%%%%%%%%%%%%%%%%%%%
\begin{proof}

Since OGDA iterates satisfy Equation \eqref{eq:PP_E_OGDA} with the error vector $\bbvarepsilon_k$ given in Equation \eqref{eq:error_vec_ogda}, using Proposition~\ref{lemma_general_pp_approx} with this error vector $\bbvarepsilon_k$ leads to
%Recall the result in \eqref{proof_mid_lemma_600}. As mentioned above, when we interpret OGDA as an approximation of proximal point, the error vector $\bbvarepsilon_k$ is equivalent to $\eta[(F(\bbz_{k+1}) - F(\bbz_k)) - (F(\bbz_{k}) - F(\bbz_{k-1}))]$. Applying this substitution into \eqref{proof_mid_lemma_600} leads to 
%%%%
\begin{align}
&F(\bbz_{k+1}) ^{\top} (\bbz_{k+1} - \bbz) 
\nonumber \\
& = \frac{1}{2\eta}  \|\bbz_k - \bbz\|^2 - \frac{1}{2\eta} \|\bbz_{k+1} - \bbz\|^2 -\frac{1}{2\eta}  \|\bbz_{k+1} - \bbz_k\|^2 \nonumber\\
&\quad+ (F(\bbz_{k+1}) - F(\bbz_k)) ^\top (\bbz_{k+1} - \bbz) - (F(\bbz_{k}) - F(\bbz_{k-1}))^\top(\bbz_{k+1} - \bbz).
\end{align}
%%%%
We add and subtract the inner product $(F(\bbz_{k}) - F(\bbz_{k-1}))^{\top}(\bbz_{k} - \bbz)$ to the right hand side of the preceding relation to obtain
%%%%
\begin{align}\label{proof_so_far_1000}
&F(\bbz_{k+1}) ^{\top} (\bbz_{k+1} - \bbz) 
\nonumber \\
& = \frac{1}{2\eta}  \|\bbz_k - \bbz\|^2 - \frac{1}{2\eta} \|\bbz_{k+1} - \bbz\|^2 -\frac{1}{2\eta}  \|\bbz_{k+1} - \bbz_k\|^2 \nonumber\\
&\quad+ (F(\bbz_{k+1}) - F(\bbz_k)) ^\top (\bbz_{k+1} - \bbz) - 
(F(\bbz_{k}) - F(\bbz_{k-1}))^{\top}(\bbz_{k} - \bbz) \nonumber\\
&\quad
+(F(\bbz_{k}) - F(\bbz_{k-1}))^{\top}(\bbz_{k} - \bbz_{k+1}).
\end{align}
%%%%
Note that $(F(\bbz_{k}) - F(\bbz_{k-1}))^{\top}(\bbz_{k} - \bbz_{k+1})$  can be upper bounded by
%%%
\begin{align}\label{sth_needed}
(F(\bbz_{k}) - F(\bbz_{k-1}))^{\top}(\bbz_{k} - \bbz_{k+1})
&\leq \|F(\bbz_{k}) - F(\bbz_{k-1})\| \|\bbz_{k} - \bbz_{k+1}\|\nonumber\\
&\leq L  \| \bbz_{k} - \bbz_{k-1}\|\|\bbz_{k} - \bbz_{k+1}\|\nonumber\\
&\leq \frac{L}{2}  \| \bbz_{k} - \bbz_{k-1}\|^2+\frac{L}{2}\|\bbz_{k} - \bbz_{k+1}\|^2,
\end{align}
%%%
where the second inequality holds due to Lipschitz continuity of the operator $F$ (Lemma \ref{lemma:operator_monotone}(b)) and the last inequality holds due to Young's inequality.\footnote{We use the following form of Young's inequality throughout the paper:$$\bba^{\top} \bbb \leq \frac{\| \bba \|^2}{2} + \frac{\| \bbb\|^2}{2}$$} Replacing $(F(\bbz_{k}) - F(\bbz_{k-1}))^{\top}(\bbz_{k} - \bbz_{k+1})$ in \eqref{proof_so_far_1000} by its upper bound in \eqref{sth_needed} yields
%%%
\begin{align}\label{proof_so_far_3000}
&F(\bbz_{k+1})^{\top}(\bbz_{k+1} - \bbz) \nonumber\\
& \leq \frac{1}{2\eta} \| \bbz_{k} - \bbz \|^2 - \frac{1}{2\eta} \| \bbz_{k+1} - \bbz \|^2 - \frac{1}{2\eta} \| \bbz_{k+1} - \bbz_{k} \|^2 \nonumber \\
& \qquad +(F(\bbz_{k+1}) - F(\bbz_k))^{\top}(\bbz_{k+1} - \bbz) - (F(\bbz_{k}) - F(\bbz_{k-1}))^{\top}(\bbz_{k} - \bbz) \nonumber \\
& \qquad +  \frac{L}{2} \| \bbz_{k} - \bbz_{k-1}\|^2 +  \frac{L}{2} \|\bbz_{k+1} - \bbz_k\|^2\nonumber \\
& \leq \frac{1}{2\eta} \| \bbz_{k} - \bbz\|^2 - \frac{1}{2\eta} \| \bbz_{k+1} - \bbz\|^2 - \frac{L}{2} \| \bbz_{k+1} - \bbz_{k} \|^2 +  \frac{L}{2} \| \bbz_{k} - \bbz_{k-1}\|^2 \nonumber \\
& \qquad +(F(\bbz_{k+1}) - F(\bbz_k))^{\top}(\bbz_{k+1} - \bbz) - (F(\bbz_{k}) - F(\bbz_{k-1}))^{\top}(\bbz_{k} - \bbz),
\end{align}
%%%
where the second inequality follows as $\eta\leq 1/2L$ and therefore $- \frac{1}{2\eta} \| \bbz_{k+1} - \bbz_{k} \|^2\leq -{L} \| \bbz_{k+1} - \bbz_{k} \|^2$. This completes the proof of Part (a) of the lemma. Now, taking the sum of the preceding relation from $k = 0, \cdots, N-1$, we obtain
%%%
\begin{align}\label{proof_so_far_4000}
&\sum_{k=0}^{N-1} F(\bbz_{k+1})^{\top}(\bbz_{k+1} - \bbz) \nonumber\\
& \leq \frac{1}{2\eta} \| \bbz_{0} - \bbz \|^2 - \frac{1}{2\eta} \| \bbz_{N} - \bbz \|^2 - \frac{L}{2} \| \bbz_{N} - \bbz_{N-1} \|^2+ \frac{L}{2} \| \bbz_{0} - \bbz_{-1} \|^2 \nonumber \\
& \qquad +(F(\bbz_{N}) - F(\bbz_{N-1}))^{\top}(\bbz_{N} - \bbz) - (F(\bbz_{0}) - F(\bbz_{-1}))^{\top}(\bbz_{0} - \bbz).
\end{align}
%%%
Now set $\bbz = \bbz^*$, where $\bbz^* \in \mathcal{Z}^*$, to obtain 
%%%
\begin{align}\label{proof_so_far_4010}
&\sum_{k=0}^{N-1} F(\bbz_{k+1})^{\top}(\bbz_{k+1} - \bbz^*) \nonumber\\
& \leq \frac{1}{2\eta} \| \bbz_{0} - \bbz^* \|^2 - \frac{1}{2\eta} \| \bbz_{N} - \bbz^* \|^2 - \frac{L}{2} \| \bbz_{N} - \bbz_{N-1} \|^2+ \frac{L}{2} \| \bbz_{0} - \bbz_{-1} \|^2 \nonumber \\
& \qquad +(F(\bbz_{N}) - F(\bbz_{N-1}))^{\top}(\bbz_{N} - \bbz^*) - (F(\bbz_{0}) - F(\bbz_{-1}))^{\top}(\bbz_{0} - \bbz^*).
\end{align}
%%%
Note that each term of the summand in the sum in the left is nonnegative due to monotonicity of $F$ and therefore the sum is also nonnegative. Further, we know that $\bbz_0 = \bbz_{-1}$. Using these observations we can write 
%%%%
%%%
\begin{align}\label{proof_so_far_5000}
0& \leq \frac{1}{2\eta} \| \bbz_{0} - \bbz^* \|^2 - \frac{1}{2\eta} \| \bbz_{N} - \bbz^* \|^2 - \frac{L}{2} \| \bbz_{N} - \bbz_{N-1} \|^2 \nonumber \\
& \qquad +(F(\bbz_{N}) - F(\bbz_{N-1}))^{\top}(\bbz_{N} - \bbz^*).
\end{align}
%%%
Using Lipschitz continuity of the operator $F(\cdot)$ (Lemma \ref{lemma:operator_monotone}(b)) and Young's inequality in the preceding relation, we have
%%%
\begin{align}\label{proof_so_far_6000}
0& \leq \frac{1}{2\eta} \| \bbz_{0} - \bbz^* \|^2 - \frac{1}{2\eta} \| \bbz_{N} - \bbz^* \|^2 - \frac{L}{2} \| \bbz_{N} - \bbz_{N-1} \|^2 \nonumber \\
& \qquad + L \| \bbz_{N}- \bbz_{N-1} \| \| \bbz_{N} - \bbz^* \|
 \nonumber \\
&\leq  \frac{1}{2\eta} \| \bbz_{0} - \bbz^* \|^2 - \frac{1}{2\eta} \| \bbz_{N} - \bbz^* \|^2 - \frac{L}{2} \| \bbz_{N} - \bbz_{N-1} \|^2 \nonumber \\
& \qquad +\frac{L}{2}\|\bbz_{N}- \bbz_{N-1}\|^{2}+\frac{L}{2}\|\bbz_{N} - \bbz^*\|^2
 \nonumber \\
& \leq \frac{1}{2\eta} \| \bbz_{0} - \bbz^* \|^2 - \frac{1}{2\eta} \| \bbz_{N} - \bbz^* \|^2 +\frac{L}{2}\|\bbz_{N} - \bbz^*\|^2
\end{align}
%%%
Regrouping the terms gives us
%%%
\begin{align}\label{proof_so_far_7000}
\|\bbz_{N} - \bbz^*\|^2\leq \frac{1}{(1 - \eta L)} \| \bbz_{0} - \bbz^* \|^2 .
\end{align}
%%%
Using the condition $\eta\leq 1/2L$, it follows that for any iterate $N$ we have 
%%%
\begin{align}\label{proof_so_far_8000}
\|\bbz_{N} - \bbz^*\|^2\leq 2 \| \bbz_{0} - \bbz^* \|^2 ,
\end{align}
%%%
%
% and using the fact that the left side of \eqref{proof_so_far_3000} is positive, by using a stepsize $\eta \leq \frac{1}{2L} $ (Note that $\bbz_0 = \bbz_{-1}$):
%\begin{align}
%0 &\leq L \| \bbz_0 - \bbz^* \|^2 - L \| \bbz_N - \bbz^* \|^2 - \frac{L}{2} \| \bbz_N - \bbz_{N-1} \|^2 \nonumber \\
%& \qquad \qquad + (F(\bbz_{N}) - F(\bbz_{N-1}))^{\top}(\bbz_{N} - \bbz^*) \nonumber \\
%&\leq L \| \bbz_0 - \bbz^* \|^2 - L \| \bbz_N - \bbz^* \|^2 - \frac{L}{2} \| \bbz_N - \bbz_{N-1} \|^2 \nonumber \\
%& \qquad \qquad + L \| \bbz_{N} - \bbz_{N-1} \| \| \bbz_{N} - \bbz^*\| \nonumber \\
%&\leq L \| \bbz_0 - \bbz^* \|^2 - L \| \bbz_N - \bbz^* \|^2 - \frac{L}{2} \| \bbz_N - \bbz_{N-1} \|^2 \nonumber \\
%& \qquad \qquad + \frac{L}{2} \| \bbz_N - \bbz^*\|^2 + \frac{L}{2}\| \bbz_N - \bbz_{N-1} \|^2 \nonumber \\
%&\leq L \| \bbz_0 - \bbz^* \|^2 - \frac{L}{2} \| \bbz_N - \bbz^* \|^2  \label{eq:bddness_OGDA} \\
%&\leq L \|\bbz_0 - \bbz^*\|^2
%\end{align}
%This inequality is valid for all $N$. Therefore:
%\begin{align}
%\| \bbz_N - \bbz^* \|^2 \leq 2 \| \bbz_0 - \bbz^* \|^2 \quad \quad \forall \: \:  N
%\end{align}
%showing boundedness.
and the claim in Part (b) follows.
\end{proof}

%%%%%%%%%%%%%%%%%%%%%%%%%%%%%%%%%
%%%%%%%%%%%%%%%%%%%%%%%%%%%%%%%%%
%%%%%%     M  A  I  N     T  E  X  T     %%%%%%%%%%%%%
%%%%%%%%%%%%%%%%%%%%%%%%%%%%%%%%%
%%%%%%%%%%%%%%%%%%%%%%%%%%%%%%%%%
According to Lemma \ref{lemma:ogda_bounded_set}, the sequence of iterates $\{\bbx_k,\bby_k\}$ generated by OGDA method stays within a closed and bounded convex set. We use this result to prove a sublinear convergence rate of $\mathcal{O}(1/k)$ for the function value of the averaged iterates generated by OGDA to the function value at a saddle point, for smooth and convex-concave functions in the following theorem.

\begin{theorem}
\label{thm_OGDA_conv}
Suppose Assumptions \ref{ass:convex_concave},~\ref{ass:scsc} and \ref{ass:sol_set} hold. Let $\{ \bbx_k, \bby_k \}$ be the iterates generated by the OGDA updates in \eqref{OGDA_update}. Let the initial conditions satisfy $\bbx_0 = \bbx_{-1}$ and $\bby_0 = \bby_{-1}$. Consider the definition of the averaged iterates $\hbx_N,\hby_N$ in \eqref{avg_iterate} and the compact convex set $\ccalD$ in \eqref{set_D}.  If the stepsize $\eta$ satisfies the condition $0 < \eta\leq 1/2L$, then for all $N \geq 1$, we have
%Consider the optimistic gradient descent ascent (OGDA) method introduced in \eqref{OGDA_update}. Further, recall the definition of the time-average iterates $\{\hbx_k,\hby_k\}$ in \eqref{avg_iterate} and the compact convex set $\ccalD$ in \eqref{set_D}. If  Assumptions~\ref{ass:convex_concave}-\ref{ass:sol_set} hold and the stepsize $\eta$ satisfies the condition $\eta\leq 1/2L$, then the iterates generated by OGDA satisfy 
%%%
\begin{align}\label{OGDA_main_claim}
\left[\max_{\bby: (\hat{\bbx}_N, \bby) \in \mathcal{D}} f(\hat{\bbx}_N, \bby) - f^{\star}\right] + \left[f^{\star} - \min_{\bbx: (\bbx, \hat{\bby}_N) \in \mathcal{D}} f(\bbx, \hat{\bby}_N)\right] \leq \frac{D(8L + \frac{1}{2\eta})}{N},
\end{align}
%%%%
where $f^{\star} = f(\bbx^{*}, \bby^{*})$
% \max_{\bbx \in \mathcal{D}} \min_{\bby \in \mathcal{D}}  f(\bbx, \bby)$ we should be able to replace this with $f(\bbx^*,\bby^*)$ since 
and $D=\|\bbx_0-\bbx^*\|^2+\|\bby_0-\bby^*\|^2$.
\end{theorem}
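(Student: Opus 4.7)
The strategy is to sum the per-iteration estimate of Lemma~\ref{lemma:ogda_bounded_set}(a) over $k=0,\dots,N-1$, telescope, and then pass the resulting bound on $\sum_{k=1}^{N} F(\bbz_k)^\top(\bbz_k-\bbz)$ through Lemma~\ref{lemma:iter_erg_ineq} to obtain a bound on the function-value gap. On summing, three natural telescopes arise: the $\frac{1}{2\eta}$ quadratic-distance terms collapse to $\frac{1}{2\eta}(\|\bbz_0-\bbz\|^2-\|\bbz_N-\bbz\|^2)$; the $\frac{L}{2}$ consecutive-difference terms collapse to $\frac{L}{2}(\|\bbz_0-\bbz_{-1}\|^2-\|\bbz_N-\bbz_{N-1}\|^2)$; and the momentum inner-product terms telescope to their endpoint values. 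The initial condition $\bbz_{-1}=\bbz_0$ eliminates two of those boundary contributions, leaving only $-\frac{L}{2}\|\bbz_N-\bbz_{N-1}\|^2+(F(\bbz_N)-F(\bbz_{N-1}))^\top(\bbz_N-\bbz)$.

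\textbf{Closing the boundary term.} The remaining inner product is the main obstacle. I would bound it by the same three-step trick used inside the proof of Lemma~\ref{lemma:ogda_bounded_set}(b): Cauchy--Schwarz, then the $L$-Lipschitz bound of Lemma~\ref{lemma:operator_monotone}(b), then Young's inequality with weight $L$, yielding $(F(\bbz_N)-F(\bbz_{N-1}))^\top(\bbz_N-\bbz)\le \tfrac{L}{2}\|\bbz_N-\bbz_{N-1}\|^2+\tfrac{L}{2}\|\bbz_N-\bbz\|^2$. The first half cancels the stray $-\tfrac{L}{2}\|\bbz_N-\bbz_{N-1}\|^2$, and the stepsize bound $\eta\le 1/(2L)$ makes $\tfrac{L}{2}-\tfrac{1}{2\eta}\le 0$, so the $\|\bbz_N-\bbz\|^2$ contribution is non-positive. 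I expect to land on a clean summed bound of the form $\sum_{k=1}^{N} F(\bbz_k)^\top(\bbz_k-\bbz)\le \frac{1}{2\eta}\|\bbz_0-\bbz\|^2+\frac{L}{2}\|\bbz_N-\bbz\|^2$, where I deliberately retain the second summand so that the $L$-dependence can be traced in the final constant.

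\textbf{From operator values to the saddle-point gap.} Dividing by $N$ and invoking Lemma~\ref{lemma:iter_erg_ineq} with a generic $\bbz=(\bbx,\bby)$ yields $f(\hat{\bbx}_N,\bby)-f(\bbx,\hat{\bby}_N)\le \tfrac{1}{N}\bigl[\tfrac{1}{2\eta}\|\bbz_0-\bbz\|^2+\tfrac{L}{2}\|\bbz_N-\bbz\|^2\bigr]$. To separate the two sides of \eqref{OGDA_main_claim}, I would apply this twice: once with $\bbx=\bbx^*$, using the saddle-point inequality $f(\bbx^*,\hat{\bby}_N)\le f^\star$ to upgrade $f(\hat{\bbx}_N,\bby)-f(\bbx^*,\hat{\bby}_N)$ into an upper bound on $f(\hat{\bbx}_N,\bby)-f^\star$ and then taking the supremum over $\bby\in\ccalD$; and once with $\bby=\bby^*$, where symmetrically $f(\hat{\bbx}_N,\bby^*)\ge f^\star$ controls $f^\star-f(\bbx,\hat{\bby}_N)$ after taking the infimum over $\bbx\in\ccalD$. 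Adding the two inequalities produces exactly the left-hand side of \eqref{OGDA_main_claim}.

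\textbf{Reducing to $D$.} Finally I would bound the right-hand side uniformly over the admissible $\bbz$. Lemma~\ref{lemma:ogda_bounded_set}(b) guarantees $\|\bbz_N-\bbz^*\|^2\le 2D$, the set $\ccalD$ satisfies $\|\bbz-\bbz^*\|^2\le 2D$ by definition, and $\|\bbz_0-\bbz^*\|^2=D$ by hypothesis. Combining these with the triangle inequality $\|\bbu-\bbv\|^2\le 2\|\bbu-\bbz^*\|^2+2\|\bbv-\bbz^*\|^2$ turns both $\|\bbz_0-\bbz\|^2$ and $\|\bbz_N-\bbz\|^2$ into constant multiples of $D$, and the right-hand side collapses to $\mathcal{O}\bigl(D(L+1/\eta)/N\bigr)$ with the constants stated in \eqref{OGDA_main_claim}. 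The hard part throughout is the Young--Lipschitz cancellation of the non-telescoping momentum term $(F(\bbz_N)-F(\bbz_{N-1}))^\top(\bbz_N-\bbz)$; this is precisely where the stepsize cap $\eta\le 1/(2L)$ earns its keep, and without it the telescoping would leave an uncontrolled residual at the endpoint.
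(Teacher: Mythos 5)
Your proposal follows essentially the same route as the paper: sum the per-iteration bound of Lemma~\ref{lemma:ogda_bounded_set}(a), telescope using $\bbz_{-1}=\bbz_0$, control the leftover endpoint term $(F(\bbz_N)-F(\bbz_{N-1}))^{\top}(\bbz_N-\bbz)$ via Lipschitz continuity and the diameter of $\ccalD$, and pass the averaged operator bound through Lemma~\ref{lemma:iter_erg_ineq}; this is correct. The one substantive deviation is the final step: the paper applies Lemma~\ref{lemma:iter_erg_ineq} once with both $\bbx$ and $\bby$ free, takes $\max_{\bby\in\ccalD}$ and $\min_{\bbx\in\ccalD}$ in the single inequality $f(\hat{\bbx}_N,\bby)-f(\bbx,\hat{\bby}_N)\leq C/N$, and inserts $\pm f^{\star}$, whereas you apply it twice (once at $\bbx=\bbx^*$, once at $\bby=\bby^*$) and add, which costs an extra factor of $2$ in the constant; together with retaining the $\tfrac{L}{2}\|\bbz_N-\bbz\|^2$ term this means you do not land exactly on $D(8L+\tfrac{1}{2\eta})$, though the paper's own tracking of this constant is itself loose (it implicitly uses $\|\bbz_0-\bbz\|^2\leq D$ for $\bbz\in\ccalD$, which only holds up to a larger multiple of $D$). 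The $\mathcal{O}(DL/N)$ rate and all structural steps are sound.
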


\begin{proof}
From Lemma \ref{lemma:ogda_bounded_set}(a), we have that the iterates generated by the OGDA method satisfy Equation  \eqref{proof_so_far_3000_1}. On taking the sum of this relation from $k = 0, \cdots, N-1$, we obtain for any $\bbz$
%Recall that the iterates generated by OGDA satisfy the inequality in  \eqref{proof_so_far_4000} which shows that for any $\bbz$ we have
%%%
\begin{align}\label{proof_so_far_2_1000}
&\sum_{k=0}^{N-1} F(\bbz_{k+1})^{\top}(\bbz_{k+1} - \bbz) \nonumber\\
& \leq \frac{1}{2\eta} \| \bbz_{0} - \bbz\|^2 - \frac{1}{2\eta} \| \bbz_{N} - \bbz \|^2 - \frac{L}{2} \| \bbz_{N} - \bbz_{N-1} \|^2+ \frac{L}{2} \| \bbz_{0} - \bbz_{-1} \|^2 \nonumber \\
& \qquad +(F(\bbz_{N}) - F(\bbz_{N-1}))^{\top}(\bbz_{N} - \bbz) - (F(\bbz_{0}) - F(\bbz_{-1}))^{\top}(\bbz_{0} - \bbz).
\end{align}
%%%
Note that for any $\bbz_1, \bbz_2 \in \mathcal{D}$, we have:
\begin{align}
\| \bbz_1 - \bbz_2\|^2  &\leq 2\|\bbz_1 - \bbz^*\|^2 + 2\|\bbz_2 - \bbz^*\|^2 \nonumber \\
&\leq 4\|\bbz_0 - \bbz^*\|^2 + 4\|\bbz_0 - \bbz^*\|^2 \nonumber \\
&\leq 8D
\end{align}
where we have used the fact that $\| \bbz - \bbz^*\|^2 \leq 2 \|\bbz_0 - \bbz^*\|^2 \ \forall \ \bbz \in \mathcal{D}$ along with the fact that $\forall \ \bba, \bbb \in \mathbb{R}^d, \|\bba + \bbb\|^2 \leq 2\| \bba\|^2 + 2\| \bbb \|^2$. As $\bbz_{-1}=\bbz_{0}$ and $\eta\leq 1/2L$, for any $\bbz\in \ccalD$ we have
%%%
\begin{align}
\frac{1}{N} \sum_{k = 0}^{N-1} F(\bbz_{k+1}) ^{\top}  (\bbz_{k+1} - \bbz) &\leq \frac{ \frac{1}{2\eta} \| \bbz_0 - \bbz \|^2 + (F(\bbz_{N}) - F(\bbz_{N-1}))^{\top}(\bbz_{N} - \bbz)}{N} \nonumber \\
&\leq \frac{D(8L + \frac{1}{2\eta})}{N}.
\label{eq:rate_1}
\end{align}
This inequality follows since:
\begin{align}
(F(\bbz_{N}) - F(\bbz_{N-1}))^{\top}(\bbz_{N} - \bbz) &\leq \| F(\bbz_{N}) - F(\bbz_{N-1}) \| \| \bbz_{N} - \bbz\| \nonumber \\
& \leq L \|\bbz_{N} - \bbz_{N-1}\| \| \bbz_{N} - \bbz\| 
\end{align}
and for any $\bbx, \bby \in \mathcal{D}$, we have:
\begin{align}
\|\bbx - \bby\| &\leq \|\bbx - \bbz^*\| + \| \bby - \bbz^* \| \nonumber \\
&\leq 2\sqrt{2D}
\end{align}
Therefore, we have:
\begin{align}
(F(\bbz_{N}) - F(\bbz_{N-1}))^{\top}(\bbz_{N} - \bbz) \leq 8LD
\end{align}
which immediately gives us Inequality \eqref{eq:rate_1}.
%where $D$ is the diameter of the set. 
%Hence, we have
%%%%
%\begin{align}
%\frac{1}{N} \sum_{k = 1}^{N} F(\bbz_{k}) ^{\top}  (\bbz_{k} - \bbz) &= \frac{1}{N} \sum_{k = 1}^{N} [ \nabla_{\bbx}f(\bbx_k, \bby_k)^{\top} (\bbx_{k} - \bbx ) + \nabla_{\bby}f(\bbx_k, \bby_k)^{\top} (\bby - \bby_k ) ] \nonumber \\
%&\geq \frac{1}{N} \sum_{k = 1}^{N} [f(\bbx_k, \bby_k) - f(\bbx, \bby_k) + f(\bbx_k, \bby) - f(\bbx_k, \bby_k) ] \nonumber \\
%&= \frac{1}{N} \sum_{k = 1}^{N} [ f(\bbx_k, \bby)  - f(\bbx, \bby_k) ] \nonumber \\
%&\geq f(\hat{\bbx}_N, \bby) - f(\bbx, \hat{\bby}_N)
%\label{eq:error_crit}
%\end{align}
%where $\hat{\bbx}_N = \frac{1}{N}  \sum_{k = 1}^{N} \bbx_k$ and $\hat{\bby}_N = \frac{1}{N}  \sum_{k = 1}^{N} \bby_k$. Substituting Inequality \eqref{eq:error_crit} in \eqref{eq:rate_1}, 
Combining relation \eqref{eq:rate_1} with Proposition~\ref{lemma:iter_erg_ineq}
we have that for all $\bbx, \bby \in \mathcal{D}$
\begin{align}
f(\hat{\bbx}_N, \bby) - f(\bbx, \hat{\bby}_N) \leq \frac{D(8L + \frac{1}{2\eta})}{N}.
\end{align}
which gives us the following convergence rate estimate:
\begin{align}
\left[\max_{\bby: (\hat{\bbx}_N, \bby) \in \mathcal{D}} f(\hat{\bbx}_N, \bby) - f^{\star}\right] + \left[f^{\star} - \min_{\bbx: (\bbx, \hat{\bby}_N) \in \mathcal{D}} f(\bbx, \hat{\bby}_N)\right] \leq \frac{D(8L + \frac{1}{2\eta})}{N}, \nonumber
\end{align}
where $f^{\star} = f(\bbx^*, \bby^*)$.
%\max_{\bbx \in \mathcal{D}} \min_{\bby \in \mathcal{D}} f(\bbx, \bby)$. \red{(see \cite{nemirovski2004prox} for more details)} \blue{why do we need this sentence?}
\end{proof}

Note that convergence in Theorem \ref{thm_OGDA_conv} is shown in terms of the Primal-Dual gap $\max_{\bby: (\hat{\bbx}_N, \bby) \in \mathcal{D}} f(\hat{\bbx}_N, \bby) - \min_{\bbx: (\bbx, \hat{\bby}_N) \in \mathcal{D}} f(\bbx, \hat{\bby}_N)$ which is a common measure to capture closeness to the solution in convex-concave setting (see \citep{nemirovski2004prox}). Indeed, the duality gap is zero if and only if $(\hat{\bbx}_N, \hat{\bby}_N)$ is a saddle point of the problem. The primal-dual gap also has the following game theoretic interpretation. If Player $\bbx$ is playing $\hat{\bbx}_N$, then $\max_{\bby: (\hat{\bbx}_N, \bby) \in \mathcal{D}} f(\hat{\bbx}_N, \bby)$ quantifies how much Player $\bby$ can gain by playing an action in the set $\mathcal{D}$. Similarly, if Player $\bby$ is playing $\hat{\bby}_N$, then $-\min_{\bbx: (\bbx, \hat{\bby}_N) \in \mathcal{D}} f(\bbx, \hat{\bby}_N)$ quantifies how much Player $\bbx$ can gain by playing an action in $\mathcal{D}$. Therefore, the quantity $\max_{\bby: (\hat{\bbx}_N, \bby) \in \mathcal{D}}f(\hat{\bbx}_N, \bby) - \min_{\bbx: (\bbx, \hat{\bby}_N) \in \mathcal{D}} f(\bbx, \hat{\bby}_N)$ is a measure of the sum of how much each player can gain if they unilaterally deviate from the strategy $(\hat{\bbx}_N, \hat{\bby}_N)$. This goes to zero at the Nash Equilibrium (saddle point), where no player can gain by unilaterally deviating from the equilibrium strategy.

Also, note that the result in Theorem \ref{thm_OGDA_conv} also implies that $|f(\hbx_N,\hby_N)-f^*|\leq 9LD/N$ as we show in the following corollary. 

\begin{corollary}\label{OGDA_cor}
Suppose Assumptions \ref{ass:convex_concave},~\ref{ass:scsc} and \ref{ass:sol_set} hold. Let $\{ \bbx_k, \bby_k \}$ be the iterates generated by the OGDA updates in \eqref{OGDA_update}. Consider the definition of the averaged iterates $\hbx_N,\hby_N$ in \eqref{avg_iterate}.  If the stepsize $\eta$ satisfies the condition $0 < \eta\leq 1/2L$, then for all $N \geq 1$, we have
%Suppose the conditions in Theorem \ref{thm_OGDA_conv} are satisfied. Then, the iterates generated by OGDA satisfy
$$|f(\hbx_N,\hby_N)-f^{\star}|\leq \frac{D(8L + \frac{1}{2\eta})}{N},$$
where $f^{\star} = f(\bbx^*, \bby^*)$.
\end{corollary}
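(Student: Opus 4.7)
The plan is to derive the corollary as a direct consequence of Theorem~\ref{thm_OGDA_conv} by sandwiching $f(\hbx_N,\hby_N)$ between the two bracketed quantities appearing in \eqref{OGDA_main_claim}. The key observation is that both of those brackets are individually nonnegative, so once we can upper and lower bound $f(\hbx_N,\hby_N)-f^\star$ by the corresponding bracket, each individually is at most the full sum, which Theorem~\ref{thm_OGDA_conv} already bounds by $D(8L+\tfrac{1}{2\eta})/N$.

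First I would verify the nonnegativity of each bracket. The saddle point $(\bbx^*,\bby^*)\in\ccalD$ since $\ccalD$ is centered at $\bbz^*$, so the saddle point inequality gives $\max_{\bby\in\ccalD} f(\hbx_N,\bby)\geq f(\hbx_N,\bby^*)\geq f(\bbx^*,\bby^*)=f^\star$ and similarly $\min_{\bbx\in\ccalD} f(\bbx,\hby_N)\leq f(\bbx^*,\hby_N)\leq f^\star$. Hence both brackets in \eqref{OGDA_main_claim} are nonnegative and each is individually bounded by $D(8L+\tfrac{1}{2\eta})/N$.

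Next I would argue that $(\hbx_N,\hby_N)\in\ccalD$. By Lemma~\ref{lemma:ogda_bounded_set}(b), every iterate $\bbz_k$ lies in $\ccalD$, and since $\ccalD$ is defined by a single quadratic inequality it is convex, so the convex combination defining $(\hbx_N,\hby_N)$ in \eqref{avg_iterate} also lies in $\ccalD$. This lets me use $\hby_N$ as a feasible choice in $\max_{\bby\in\ccalD} f(\hbx_N,\bby)$ and $\hbx_N$ in $\min_{\bbx\in\ccalD} f(\bbx,\hby_N)$, giving the two inequalities
\begin{align*}
f(\hbx_N,\hby_N)-f^\star &\leq \max_{\bby\in\ccalD} f(\hbx_N,\bby)-f^\star,\\
f^\star-f(\hbx_N,\hby_N) &\leq f^\star-\min_{\bbx\in\ccalD} f(\bbx,\hby_N).
\end{align*}

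Finally, combining the nonnegativity of each bracket with Theorem~\ref{thm_OGDA_conv} bounds each right-hand side above by $D(8L+\tfrac{1}{2\eta})/N$, and taking the maximum of the two inequalities yields the claimed bound on $|f(\hbx_N,\hby_N)-f^\star|$. There is no real obstacle here; the only thing to be careful about is ensuring $(\bbx^*,\bby^*)\in\ccalD$ and $(\hbx_N,\hby_N)\in\ccalD$ so that the $\max$ and $\min$ over $\ccalD$ can be evaluated at these points, both of which follow immediately from the definition of $\ccalD$ and its convexity.
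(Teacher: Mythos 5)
Your proposal is correct and follows essentially the same route as the paper's own proof: establish nonnegativity of each bracket via the saddle point inequality and $(\bbx^*,\bby^*)\in\ccalD$, note $(\hbx_N,\hby_N)\in\ccalD$, and then bound $f(\hbx_N,\hby_N)-f^\star$ and $f^\star-f(\hbx_N,\hby_N)$ by the respective brackets from Theorem~\ref{thm_OGDA_conv}. The only difference is that you explicitly justify $(\hbx_N,\hby_N)\in\ccalD$ via convexity of $\ccalD$, which the paper asserts without comment.
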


\begin{proof}
Note that $[\max_{\bby: (\hat{\bbx}_N, \bby) \in \mathcal{D}} f(\hat{\bbx}_N, \bby) - f^{\star}]$  and $ [f^{\star} - \min_{\bbx: (\bbx, \hat{\bby}_N) \in \mathcal{D}} f(\bbx, \hat{\bby}_N)]$ are both nonnegative. To verify note that $$\max_{\bby: (\hat{\bbx}_N, \bby) \in \mathcal{D}} f(\hat{\bbx}_N, \bby)\geq f(\hat{\bbx}_N, \bby^*)\geq  f(\bbx^*, \bby^*)$$ and $$\min_{\bbx: (\bbx, \hat{\bby}_N) \in \mathcal{D}} f(\bbx, \hat{\bby}_N)\leq f(\bbx^*, \hat{\bby}_N)\leq f(\bbx^*, \bby^*)$$ (since $(\bbx^*, \bby^*) \in \mathcal{D}$). Further, note that $(\hbx_N,\hby_N)$ belongs to the set $\ccalD$. Hence, it yields $$f(\hbx_N,\hby_N)-f^{\star}\leq \max_{\bby: (\hat{\bbx}_N, \bby) \in \mathcal{D}} f(\hat{\bbx}_N, \bby) - f^{\star}\leq \frac{D(8L + \frac{1}{2\eta})}{N}.$$Also, we can show that $$f^{\star} -f(\hbx_N,\hby_N)\leq f^{\star} - \min_{\bbx: (\bbx, \hat{\bby}_N) \in \mathcal{D}} f(\bbx, \hat{\bby}_N)\leq \frac{D(8L + \frac{1}{2\eta})}{N}.$$Therefore, $|f(\hbx_N,\hby_N)-f^{\star}|\leq \frac{D(8L + \frac{1}{2\eta})}{N}$.
\end{proof}

The result in Corollary \ref{OGDA_cor} shows that the function value of the averaged iterates generated by OGDA converges to the function value at a saddle point of problem \eqref{main_prob} at a sublinear rate of $\mathcal{O}(1/k)$ when the function is smooth and convex-concave. To the best of our knowledge, this is the first non-asymptotic complexity bound for OGDA for the convex-concave setting. Moreover, note that without computing any extra gradient evaluation, i.e., computing only one gradient per iteration with respect to $\bbx$ and $\bby$, OGDA recovers the convergence rate of proximal point method. 

%!TEX root = main.tex

\section{Extragradient Method}
\label{sec:EG}

In this section, we consider finding a saddle point of a general smooth convex-concave function using the Extra-gradient (EG) method. Similar to our analysis of the OGDA method, we show that by interpreting the EG method as an approximation of the proximal point method it is possible to establish a convergence rate of $\mathcal{O}(1/k)$ through a simple analysis.

Consider the update of EG in which we first compute a set of mid-point iterates $\{\bbx_{k+\frac{1}{2}}, \bby_{k+\frac{1}{2}}\}$ using the gradients with respect to $\bbx$ and $\bby$ at the current iterate
%%%
\begin{align}\label{EG_first_step_update}
&\bbx_{k+\frac{1}{2}} = \bbx_{k} -\eta \nabla_\bbx f(\bbx_k,\bby_k),\nonumber\\
&\bby_{k+\frac{1}{2}} = \bby_{k} +\eta \nabla_\bby f(\bbx_k,\bby_k).
\end{align}
%%% 
Then, we compute the next iterates of the EG method $\{\bbx_{k+1}, \bby_{k+1}\}$ using the gradients at the mid-points $\{\bbx_{k+\frac{1}{2}}, \bby_{k+\frac{1}{2}}\}$, i.e., 
%%%
\begin{align}\label{EG_second_step_update}
&\bbx_{k+1} = \bbx_{k} -\eta \nabla_\bbx f(\bbx_{k+\frac{1}{2}}, \bby_{k+\frac{1}{2}}),\nonumber\\
&\bby_{k+1} = \bby_{k} +\eta \nabla_\bby f(\bbx_{k+\frac{1}{2}}, \bby_{k+\frac{1}{2}}).
\end{align}
%%% 
We aim to show that EG, similar to OGDA, can be analyzed for convex-concave problems by considering it as an approximation of the proximal point. To do so, let us use the notation  $\bbz = [\bbx; \bby]\in \reals^{n+m}$ and  $F(\bbz) = [\nabla_{\bbx} f(\bbx, \bby) ;  - \nabla_{\bby} f(\bbx, \bby)  ]\in \reals^{n+m}$ to write the update of EG as
%%%
\begin{align}\label{VI_EG_update}
\bbz_{k+\frac{1}{2}} &= \bbz_k - \eta F(\bbz_k), \nonumber \\
\bbz_{k+1} &= \bbz_k - \eta F(\bbz_{k+\frac{1}{2}}).
\end{align}
%%%
To better highlight the connection between proximal point and EG, let us focus on the expression for the update of the mid-point iterates in EG. Considering the updates in \eqref{VI_EG_update}, we have
\begin{align}
\bbz_{k+\frac{1}{2}} &= \bbz_k - \eta F(\bbz_k), \nonumber \\
&= \bbz_{k-1} - \eta F(\bbz_{k-\frac{1}{2}}) - \eta F(\bbz_k) \nonumber \\
&= \bbz_{k-\frac{1}{2}} + \eta F(\bbz_{k-1}) - \eta F(\bbz_{k-\frac{1}{2}}) - \eta F(\bbz_k) \nonumber
\end{align}
where the second equality follows by replacing $\bbz_k$ by its update $ \bbz_{k-1} - \eta F(\bbz_{k-\frac{1}{2}}) $, and the second equality follows by considering the update $\bbz_{k-\frac{1}{2}}=\bbz_{k-1}-\eta F(\bbz_{k-1})$. Therefore, rearranging this equation, we can rewrite the updates as
%%%
\begin{equation}\label{EG_VI_mid_update}
\bbz_{k+\frac{1}{2}} = \bbz_{k-\frac{1}{2}} - \eta F(\bbz_{k-\frac{1}{2}}) - \eta(F(\bbz_k) - F(\bbz_{k-1})).
\end{equation}
%%%
One can consider the expression $F(\bbz_k) - F(\bbz_{k-1})$ as an approximation of the variation $F(\bbz_{k+\frac{1}{2}}) -F(\bbz_{k-\frac{1}{2}})$. To be more precise, if we assume that the variations $F(\bbz_k) - F(\bbz_{k-1})$ and $F(\bbz_{k+\frac{1}{2}}) -F(\bbz_{k-\frac{1}{2}})$ are close to each other, i.e., $F(\bbz_{k+\frac{1}{2}}) -F(\bbz_{k-\frac{1}{2}})\approx F(\bbz_k) - F(\bbz_{k-1})$, then the update in \eqref{EG_VI_mid_update} behaves like the proximal point update with. respect to the mid-point iterates, i.e., 
%%%
\begin{equation}\label{prox_VI_update_2}
\bbz_{k+\frac{1}{2}} = \bbz_{k-\frac{1}{2}} - \eta F(\bbz_{k+\frac{1}{2}}).
\end{equation}
%%%
%We first use this notation to show that the iterates generated by EG stay within a closed and bounded convex set for all iterates $k\geq 0$.
We first derive a result similar to Proposition~\ref{lemma_general_pp_approx} for the specific case of EG iterates (Lemma~\ref{lemma:bdd_iter_EG}(a)). We then show the the boundedness of the EG iterates in Lemma~\ref{lemma:bdd_iter_EG}(b) (Note that the boundedness of the EG updates can also be deduced from the convergence results of  \citet{korpelevich1976extragradient} and \cite{monteiro2010complexity}).
\begin{lemma}\label{lemma:bdd_iter_EG}
Let $\{\bbz_k\},\{ \bbz_{k+\frac{1}{2}} \}$ be the iterates generated by the extra-gradient (EG) method introduced in \eqref{VI_EG_update}. If Assumptions~\ref{ass:convex_concave},~\ref{ass:scsc} and \ref{ass:sol_set} hold and the stepsize $\eta$ satisfies the condition $0 < \eta < {1}/{L}$, then: \\
(a) The iterates $\{\bbz_k\},\{ \bbz_{k+\frac{1}{2}} \}$ satisfy the following relation:
\begin{align}\label{eq:before_sum_EG_1}
F(\bbz_{k+\frac{1}{2}}) ^{\top}  & (\bbz_{k+\frac{1}{2}} - \bbz) \nonumber  \\
&\leq \frac{1}{2\eta}\|\bbz_{k-\frac{1}{2}} - \bbz\|^2 -  \frac{1}{2\eta} \|\bbz_{k+\frac{1}{2}} - \bbz\|^2+  \frac{L}{2} \| \bbz_{k-\frac{1}{2}} - \bbz_{k-1}\|^2 \nonumber \\
& \quad +(F(\bbz_{k+\frac{1}{2}}) - F(\bbz_k))^{\top}(\bbz_{k+\frac{1}{2}} - \bbz) - (F(\bbz_{k-\frac{1}{2}}) - F(\bbz_{k-1}))^{\top}(\bbz_{k-\frac{1}{2}} - \bbz).
\end{align}
(b) The iterates $\{\bbz_k\},\{ \bbz_{k+\frac{1}{2}} \}$ stay within the compact set $\ccalD$ defined as 
%%%
\begin{equation}\label{bounded_set_eg}
\ccalD:=\{(\bbx,\bby)\mid \| \bbx - \bbx^* \|^2 + \| \bby - \bby^* \|^2\leq  \left(2 + \frac{2}{1 - \eta^2 L^2} \right) (\| \bbx_0 - \bbx^* \|^2 + \| \bby_0 - \bby^* \|^2)  \},
\end{equation}
%%%
where $(\bbx^*,\bby^*) = \bbz^* \in \mathcal{Z}^*$ is a saddle point of the problem defined in \eqref{main_prob}. Moreover, the sum $\sum_{k=0}^{\infty} \| \bbz_{k+\frac{1}{2}} - \bbz_k \|^2$ is bounded above by
%%%
\begin{align}\label{bounded_sum_claim_EG}
\sum_{k=0}^{\infty} \| \bbz_{k+\frac{1}{2}} - \bbz_k \|^2 \leq \frac{\|\bbz_{0} - \bbz^*\|^2} {1 - \eta^2 L^2}.
\end{align}
%%%%
\end{lemma}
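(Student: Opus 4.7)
The plan is to establish part (a) by recognizing the EG midpoint recursion as a perturbed proximal point iteration and invoking Lemma \ref{lemma_general_pp_approx}. From \eqref{EG_VI_mid_update} one has
\begin{equation*}
\bbz_{k+\frac{1}{2}} = \bbz_{k-\frac{1}{2}} - \eta F(\bbz_{k+\frac{1}{2}}) + \bbvarepsilon_k, \quad \bbvarepsilon_k := \eta\bigl[(F(\bbz_{k+\frac{1}{2}}) - F(\bbz_k)) - (F(\bbz_{k-\frac{1}{2}}) - F(\bbz_{k-1}))\bigr],
\end{equation*}
so Lemma \ref{lemma_general_pp_approx} applied to the sequence of midpoints yields an equality for $F(\bbz_{k+\frac{1}{2}})^{\top}(\bbz_{k+\frac{1}{2}} - \bbz)$ that involves $\frac{1}{\eta}\bbvarepsilon_k^{\top}(\bbz_{k+\frac{1}{2}} - \bbz)$. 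I will then rewrite the inner product $(F(\bbz_{k-\frac{1}{2}}) - F(\bbz_{k-1}))^{\top}(\bbz_{k+\frac{1}{2}} - \bbz)$ by adding and subtracting $(F(\bbz_{k-\frac{1}{2}}) - F(\bbz_{k-1}))^{\top}(\bbz_{k-\frac{1}{2}} - \bbz)$; this isolates the ``old'' cross term required for telescoping in the subsequent rate analysis and leaves a residual inner product with $\bbz_{k-\frac{1}{2}} - \bbz_{k+\frac{1}{2}}$, which Lipschitz continuity and Young's inequality bound by $\frac{L}{2}\|\bbz_{k-\frac{1}{2}} - \bbz_{k-1}\|^2 + \frac{L}{2}\|\bbz_{k+\frac{1}{2}} - \bbz_{k-\frac{1}{2}}\|^2$. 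Since $\eta < 1/L$, the second half is absorbed by the $-\frac{1}{2\eta}\|\bbz_{k+\frac{1}{2}} - \bbz_{k-\frac{1}{2}}\|^2$ term delivered by the lemma, giving precisely \eqref{eq:before_sum_EG_1}.

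For part (b) I would argue directly from the EG update rather than telescoping (a), since (a) accumulates an uncontrolled $\frac{L}{2}\sum\|\bbz_{k-\frac{1}{2}} - \bbz_{k-1}\|^2$ on its right-hand side. Expanding $\|\bbz_{k+1} - \bbz^*\|^2 = \|\bbz_k - \bbz^* - \eta F(\bbz_{k+\frac{1}{2}})\|^2$ and using the identity $\bbz_k - \bbz^* = (\bbz_{k+\frac{1}{2}} - \bbz^*) + \eta F(\bbz_k)$ gives
\begin{equation*}
\|\bbz_{k+1} - \bbz^*\|^2 = \|\bbz_k - \bbz^*\|^2 - 2\eta F(\bbz_{k+\frac{1}{2}})^{\top}(\bbz_{k+\frac{1}{2}} - \bbz^*) + \eta^2\|F(\bbz_{k+\frac{1}{2}}) - F(\bbz_k)\|^2 - \eta^2\|F(\bbz_k)\|^2.
\end{equation*}
Monotonicity combined with $F(\bbz^*) = 0$ kills the middle inner product, while Lipschitz continuity together with $\|\bbz_{k+\frac{1}{2}} - \bbz_k\| = \eta\|F(\bbz_k)\|$ collapses the last two terms into $-(1 - \eta^2 L^2)\|\bbz_{k+\frac{1}{2}} - \bbz_k\|^2$. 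Telescoping the resulting recursion yields both $\|\bbz_N - \bbz^*\|^2 \leq \|\bbz_0 - \bbz^*\|^2$ for every $N$ and the summability bound \eqref{bounded_sum_claim_EG}. The triangle inequality $\|\bbz_{k+\frac{1}{2}} - \bbz^*\|^2 \leq 2\|\bbz_{k+\frac{1}{2}} - \bbz_k\|^2 + 2\|\bbz_k - \bbz^*\|^2$ then places the midpoints inside the set \eqref{bounded_set_eg}.

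The main obstacle is the bookkeeping in part (a): the add-and-subtract step must produce a cross term whose index is shifted by one (so that it telescopes cleanly with its successor when summed in the forthcoming rate theorem), while the negative quadratic $-\frac{1}{2\eta}\|\bbz_{k+\frac{1}{2}} - \bbz_{k-\frac{1}{2}}\|^2$ from Lemma \ref{lemma_general_pp_approx} must exactly dominate the $\frac{L}{2}\|\bbz_{k+\frac{1}{2}} - \bbz_{k-\frac{1}{2}}\|^2$ leftover from Young's inequality; the stepsize restriction $\eta < 1/L$ is precisely what makes this cancellation possible. By contrast, part (b) is essentially a mechanical consequence of monotonicity and Lipschitz continuity once the right algebraic identity is in place.
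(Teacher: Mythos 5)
Your proposal is correct and follows essentially the same route as the paper: part (a) via the perturbed proximal point interpretation of the midpoint recursion, the add-and-subtract decomposition of the error inner product, and absorption of the Young's-inequality leftover using $\eta < 1/L$; part (b) via the one-step inequality $\|\bbz_{k+1}-\bbz^*\|^2 \le \|\bbz_k-\bbz^*\|^2 - (1-\eta^2L^2)\|\bbz_{k+\frac{1}{2}}-\bbz_k\|^2$ obtained from monotonicity, $F(\bbz^*)=0$, and Lipschitz continuity, followed by telescoping. The only difference is cosmetic: you expand $\|\bbz_{k+1}-\bbz^*\|^2$ directly and substitute $\bbz_k-\bbz^* = (\bbz_{k+\frac{1}{2}}-\bbz^*)+\eta F(\bbz_k)$, whereas the paper expands $\|\bbz_k-\bbz\|^2$ around $\bbz_{k+1}$ and injects $\bbz_{k+\frac{1}{2}}$ into the cross term, yielding the identical inequality.
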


%%%%%%%%%%%%%%%%%%%%%%%%%%%%%%%%%
%%%%%%%%%%%%%%%%%%%%%%%%%%%%%%%%%
%%%%%%    P  R  O  O  F    %%%%%%%%%%%%%%%%%%
%%%%%%%%%%%%%%%%%%%%%%%%%%%%%%%%%
%%%%%%%%%%%%%%%%%%%%%%%%%%%%%%%%%
\begin{proof}
See Appendix \ref{appx:proof_of_lemma_EG} for the proof.
\end{proof}

%%%%%%%%%%%%%%%%%%%%%%%%%%%%%%%%%
%%%%%%%%%%%%%%%%%%%%%%%%%%%%%%%%%
%%%%%%     M  A  I  N     T  E  X  T     %%%%%%%%%%%%%
%%%%%%%%%%%%%%%%%%%%%%%%%%%%%%%%%
%%%%%%%%%%%%%%%%%%%%%%%%%%%%%%%%%

The result in Lemma~\ref{lemma:bdd_iter_EG} shows that the iterates generated by the update of EG belong to a bounded and closed set. %where the diameter of the set, the maximum distance between any two points in the set, is at most $2(\|\bbx_0-\bbx^*\|^2+\|\bby_0-\bby^*\|^2)$. 
Now we use this result to show that the function value of the averaged iterates converges at a sublinear rate of $\mathcal{O}(1/k)$  to the function value at a saddle point for the EG method in the following theorem.
%by interpreting this algorithm as an approximation of the proximal point method. 

%We use this connection in the following theorem to show that EG method has a sublinear convergence rate of $\mathcal{O}(1/k)$ in smooth convex-concave settings.

%%%%%%%%%%%%%%%%%%%%%%%%%%%%%%%%%
%%%%%%%%%%%%%%%%%%%%%%%%%%%%%%%%%
%%%%%%      T  H  E  O  R  E  M      %%%%%%%%%%%%%%
%%%%%%%%%%%%%%%%%%%%%%%%%%%%%%%%%
%%%%%%%%%%%%%%%%%%%%%%%%%%%%%%%%%
\begin{theorem}
\label{thm_EG_convergence}
Suppose Assumptions \ref{ass:convex_concave},~\ref{ass:scsc} and \ref{ass:sol_set} hold. Let $\{ \bbx_{k+1/2}, \bby_{k+1/2} \}$ be the iterates generated by the EG updates in  \eqref{EG_first_step_update}-\eqref{EG_second_step_update}. 
Let the initial conditions satisfy $\bbx_0 = \bbx_{-1/2}$ and $\bby_0 = \bby_{-1/2}$
Consider the definition of the averaged iterates $\hbx_N,\hby_N$ in \eqref{avg_iterate} and the compact convex set $\ccalD$ in \eqref{bounded_set_eg}.  If the stepsize $\eta$
%the condition $\eta\leq 1/2L$, then for all $N \geq 1$, we have
%Consider the extra-gradient (EG) method introduced in \eqref{EG_first_step_update}-\eqref{EG_second_step_update}. If Assumptions~\ref{ass:convex_concave},~\ref{ass:scsc}, and \ref{ass:sol_set} hold and the stepsize $\eta$ 
satisfies the condition $\eta =  \frac{\sigma}{L}$ for any $\sigma \in (0,1)$, then for all $N \geq 1$, we have
%the iterates generated by EG satisfy 
%%%
\begin{align}\label{EG_main_claim}
\left[\max_{\bby: (\hat{\bbx}_N, \bby) \in \mathcal{D}} f(\hat{\bbx}_N, \bby) - f^{\star}\right] + \left[f^{\star} - \min_{\bbx: (\bbx, \hat{\bby}_N) \in \mathcal{D}} f(\bbx, \hat{\bby}_N)\right] \leq \frac{DL \left( 16 + \frac{33}{2(1-\sigma^2)} \right)}{N},
\end{align}
%%%%
where $f^{\star} =  f(\bbx^*, \bby^*)$
%\max_{\bbx \in \mathcal{D}} \min_{\bby \in \mathcal{D}}  f(\bbx, \bby)$ 
and $D=\|\bbx_0-\bbx^*\|^2+\|\bby_0-\bby^*\|^2$.
\end{theorem}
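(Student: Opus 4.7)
The plan is to mirror the argument of Theorem \ref{thm_OGDA_conv}, but now applied to the mid-point iterates $\bbz_{k+1/2}$ of EG. The starting point is the one-step inequality of Lemma \ref{lemma:bdd_iter_EG}(a). Summing that inequality from $k=0$ to $N-1$, the pair of squared-norm terms telescopes to $\frac{1}{2\eta}\|\bbz_{-1/2}-\bbz\|^2-\frac{1}{2\eta}\|\bbz_{N-1/2}-\bbz\|^2$, and the pair of inner products $(F(\bbz_{k+1/2})-F(\bbz_k))^\top(\bbz_{k+1/2}-\bbz)-(F(\bbz_{k-1/2})-F(\bbz_{k-1}))^\top(\bbz_{k-1/2}-\bbz)$ collapses to the single boundary contribution at $k=N-1$; using the initial conditions $\bbz_{-1/2}=\bbz_0=\bbz_{-1}$ the endpoint terms at $k=0$ vanish.

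Two residual pieces then require separate treatment. The remaining sum $\frac{L}{2}\sum_{k=0}^{N-1}\|\bbz_{k-1/2}-\bbz_{k-1}\|^2$ equals $\frac{L}{2}\sum_{j=0}^{N-2}\|\bbz_{j+1/2}-\bbz_j\|^2$ (the $j=-1$ term is zero by the initial conditions), which is controlled by the summability bound \eqref{bounded_sum_claim_EG} in Lemma \ref{lemma:bdd_iter_EG}(b), giving at most $\frac{LD}{2(1-\sigma^2)}$. The surviving boundary inner product $(F(\bbz_{N-1/2})-F(\bbz_{N-1}))^\top(\bbz_{N-1/2}-\bbz)$ is handled via Cauchy--Schwarz, the Lipschitz property of $F$ from Lemma \ref{lemma:operator_monotone}(b), and Young's inequality, yielding a bound of the form $\frac{L}{2}\|\bbz_{N-1/2}-\bbz_{N-1}\|^2+\frac{L}{2}\|\bbz_{N-1/2}-\bbz\|^2$. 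The first piece is again absorbed into the summability estimate, while the second, together with the opening term $\frac{1}{2\eta}\|\bbz_0-\bbz\|^2$, is controlled for $\bbz\in\ccalD$ by the diameter bound $\|\bbz-\bbz^*\|^2\le\bigl(2+\tfrac{2}{1-\sigma^2}\bigr)D$ induced by \eqref{bounded_set_eg}.

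Assembling these estimates, dividing by $N$, and applying Lemma \ref{lemma:iter_erg_ineq} with the obvious relabeling that the relevant iterates are the mid-points $\bbz_{k+1/2}$ (the convexity/concavity step producing the ergodic inequality is insensitive to this relabeling, and $\hat\bbx_N,\hat\bby_N$ are accordingly taken as the averages of the mid-point iterates) gives $f(\hat\bbx_N,\bby)-f(\bbx,\hat\bby_N)\le C/N$ for every $\bbz=[\bbx;\bby]\in\ccalD$. Taking the supremum over $\bby\in\ccalD$ and the negative infimum over $\bbx\in\ccalD$ on the left then yields \eqref{EG_main_claim}, exactly in the style of the OGDA conclusion in Theorem~\ref{thm_OGDA_conv}.

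The main obstacle is the bookkeeping of constants: all terms depend on $\sigma$ through the diameter factor $2+\tfrac{2}{1-\sigma^2}$ from \eqref{bounded_set_eg}, the summability factor $\tfrac{1}{1-\sigma^2}$ from \eqref{bounded_sum_claim_EG}, and the coefficient $\tfrac{1}{2\eta}=\tfrac{L}{2\sigma}$, and these have to be combined carefully to match the claimed constant $9+\tfrac{17}{2(1-\sigma^2)}$. Qualitatively, however, the argument is essentially the OGDA proof applied to the mid-point sequence, with the extra wrinkle that summability of $\|\bbz_{k+1/2}-\bbz_k\|^2$ is enforced by the boundedness proof of Lemma \ref{lemma:bdd_iter_EG}(b) rather than via the stepsize constraint.
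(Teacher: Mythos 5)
Your proposal follows the paper's own proof essentially step for step: sum the one-step inequality of Lemma \ref{lemma:bdd_iter_EG}(a), telescope using the initial conditions, absorb the residual sum via the summability bound \eqref{bounded_sum_claim_EG}, control the surviving boundary inner product with Lipschitz continuity of $F$, bound all distances over $\ccalD$ by its diameter, divide by $N$, and apply Lemma \ref{lemma:iter_erg_ineq} to the averaged mid-point iterates. The only (cosmetic) deviation is applying Young's inequality to split the boundary inner product into two squared norms where the paper keeps the product $L\|\bbz_{N-\frac{1}{2}}-\bbz_{N-1}\|\,\|\bbz_{N-\frac{1}{2}}-\bbz\|$, which affects only the constant bookkeeping that you correctly identify as the remaining chore.
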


%%%%%%%%%%%%%%%%%%%%%%%%%%%%%%%%%
%%%%%%%%%%%%%%%%%%%%%%%%%%%%%%%%%
%%%%%%    P  R  O  O  F    %%%%%%%%%%%%%%%%%%
%%%%%%%%%%%%%%%%%%%%%%%%%%%%%%%%%
%%%%%%%%%%%%%%%%%%%%%%%%%%%%%%%%%
\begin{proof}
See Appendix \ref{proof_thm_EG} for the proof.
\end{proof}
Now, similar to Corollary \ref{OGDA_cor}, we have:
\begin{corollary}\label{EG_cor}
Suppose Assumptions \ref{ass:convex_concave},~\ref{ass:scsc} and \ref{ass:sol_set} hold. Let $\{ \bbx_{k+1/2}, \bby_{k+1/2} \}$ be the iterates generated by the EG updates in  \eqref{EG_first_step_update}-\eqref{EG_second_step_update}. 
Let the initial conditions satisfy $\bbx_0 = \bbx_{-1/2}$ and $\bby_0 = \bby_{-1/2}$
Consider the definition of the averaged iterates $\hbx_N,\hby_N$ in \eqref{avg_iterate}. If the stepsize $\eta$
%the condition $\eta\leq 1/2L$, then for all $N \geq 1$, we have
%Consider the extra-gradient (EG) method introduced in \eqref{EG_first_step_update}-\eqref{EG_second_step_update}. If Assumptions~\ref{ass:convex_concave},~\ref{ass:scsc}, and \ref{ass:sol_set} hold and the stepsize $\eta$ 
satisfies the condition $\eta =  \frac{\sigma}{L}$ for any $\sigma \in (0,1)$, then for all $N \geq 1$, we have
%Suppose the conditions in Theorem \ref{thm_EG_convergence} are satisfied. Then, the iterates generated by EG satisfy
$$|f(\hbx_N,\hby_N)-f^{\star}|\leq \frac{DL \left( 16 + \frac{33}{2(1-\sigma^2)} \right)}{N},$$
where $f^{\star} = f(\bbx^*, \bby^*)$.
\end{corollary}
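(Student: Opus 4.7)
The plan is to mimic the proof of Corollary \ref{OGDA_cor} essentially verbatim, since the only ingredient that changes between the OGDA and EG cases is the concrete constant on the right-hand side. The key conceptual point is that Theorem \ref{thm_EG_convergence} bounds the \emph{sum} of two nonnegative quantities, so each of them is individually bounded by the same right-hand side, and the two-sided bound on $|f(\hbx_N,\hby_N)-f^\star|$ then follows by sandwiching.

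First I would verify nonnegativity of each bracketed term. Since $(\bbx^*,\bby^*)\in\mathcal{D}$ (this is used in the statement of $\mathcal{D}$ in Lemma \ref{lemma:bdd_iter_EG}(b)), the saddle point inequality gives
\begin{equation*}
\max_{\bby\in\mathcal{D}} f(\hbx_N,\bby)\;\ge\; f(\hbx_N,\bby^*)\;\ge\; f(\bbx^*,\bby^*)\;=\;f^\star,
\end{equation*}
and analogously
\begin{equation*}
\min_{\bbx\in\mathcal{D}} f(\bbx,\hby_N)\;\le\; f(\bbx^*,\hby_N)\;\le\; f(\bbx^*,\bby^*)\;=\;f^\star.
\end{equation*}
Therefore both $[\max_{\bby\in\mathcal{D}} f(\hbx_N,\bby)-f^\star]$ and $[f^\star-\min_{\bbx\in\mathcal{D}} f(\bbx,\hby_N)]$ are nonnegative, so by Theorem \ref{thm_EG_convergence} each of them is individually bounded above by $DL(9+17/(2(1-\sigma^2)))/N$.

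Next I would check that $(\hbx_N,\hby_N)\in\mathcal{D}$. This is immediate because $\mathcal{D}$ as defined in \eqref{bounded_set_eg} is a Euclidean ball (hence convex), Lemma \ref{lemma:bdd_iter_EG}(b) guarantees that every midpoint iterate $\bbz_{k+1/2}$ lies in $\mathcal{D}$, and $(\hbx_N,\hby_N)$ is a convex combination of those iterates. Consequently
\begin{equation*}
f(\hbx_N,\hby_N)-f^\star \;\le\; \max_{\bby\in\mathcal{D}} f(\hbx_N,\bby)-f^\star \;\le\; \tfrac{DL(9+17/(2(1-\sigma^2)))}{N},
\end{equation*}
and symmetrically
\begin{equation*}
f^\star-f(\hbx_N,\hby_N) \;\le\; f^\star-\min_{\bbx\in\mathcal{D}} f(\bbx,\hby_N) \;\le\; \tfrac{DL(9+17/(2(1-\sigma^2)))}{N},
\end{equation*}
which combine to yield the claim.

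There is no real obstacle here; the argument is a short bookkeeping step on top of Theorem \ref{thm_EG_convergence}. The only subtlety worth double-checking is the membership $(\bbx^*,\bby^*)\in\mathcal{D}$ and $(\hbx_N,\hby_N)\in\mathcal{D}$, since otherwise the $\max$ and $\min$ on $\mathcal{D}$ would not dominate the evaluation at $(\bbx^*,\bby^*)$ and $(\hbx_N,\hby_N)$ respectively; but both memberships follow directly from the definition of $\mathcal{D}$ in \eqref{bounded_set_eg} and from the convexity of the averaging in \eqref{avg_iterate}.
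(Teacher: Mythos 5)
Your argument is correct and is essentially identical to the paper's own treatment: the paper proves this corollary by simply invoking the same reasoning as in Corollary~\ref{OGDA_cor}, namely nonnegativity of the two bracketed gaps, membership of $(\bbx^*,\bby^*)$ and of the averaged iterate $(\hbx_N,\hby_N)$ in the convex set $\ccalD$, and the resulting two-sided sandwich. Your explicit check that $(\hbx_N,\hby_N)\in\ccalD$ via convexity of the ball and Lemma~\ref{lemma:bdd_iter_EG}(b) is exactly the step the paper leaves implicit.
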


%%%%%%%%%%%%%%%%%%%%%%%%%%%%%%%%%
%%%%%%%%%%%%%%%%%%%%%%%%%%%%%%%%%
%%%%%%     M  A  I  N     T  E  X  T     %%%%%%%%%%%%%
%%%%%%%%%%%%%%%%%%%%%%%%%%%%%%%%%
%%%%%%%%%%%%%%%%%%%%%%%%%%%%%%%%%

%Here the error is as follows:
%\begin{align}
%\varepsilon_k^x &= \nabla_{\bbx} f(\bbx_{k+1}, \bby_{k+1}) -   \nabla_{\bbx} f(\bbx_{k+\frac{1}{2}}, \bby_{k+\frac{1}{2}})\\
%\varepsilon_k^y &= \nabla_{\bby} f(\bbx_{k+1}, \bby_{k+1}) -   \nabla_{\bby} f(\bbx_{k+\frac{1}{2}}, \bby_{k+\frac{1}{2}})
%\end{align}
%Consider the primal update:
%\begin{align}
%\frac{1}{\eta} {\varepsilon_k^x}^{\top} (\bbx_{k+1} - \bbx^*) = \frac{1}{\eta} (\nabla_{\bbx} f(\bbx_{k+1}, \bby_{k+1}) -   \nabla_{\bbx} f(\bbx_{k+\frac{1}{2}}, \bby_{k+\frac{1}{2}}))^{\top} (\bbx_{k+1} - \bbx^*)
%\end{align}

%\input{simulations}

\section{Discussion and Numerical Experiments}
The main message of this work is that the OGDA algorithm obtains the same convergence rate of $\mathcal{O}(1/k)$, the best achievable rate (see \citep{nemirovski2004prox}), also achieved by EG. However, the advantage of OGDA is that we need only one gradient computation at each step, as opposed to two gradient computations needed in EG. This shows the computational advantage that OGDA has over EG. 

\begin{figure}[t!]
  \centering
\includegraphics[width=0.8\columnwidth]{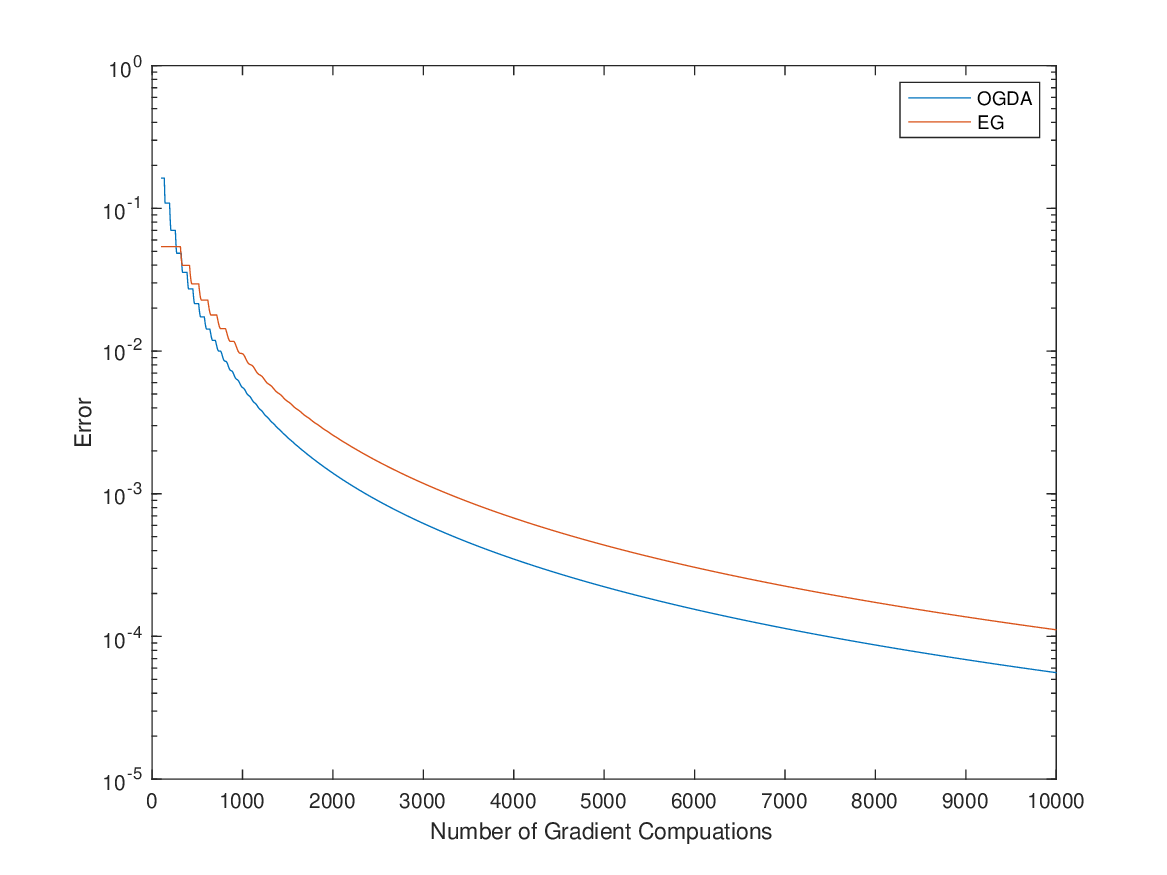}
\vspace{-2mm}
    \caption{Number of Gradient computations required ($x$-axis) to reach any error level ($y$-axis) for both OGDA and EG for the problem in Equation \eqref{eq:simul_bi}
}\vspace{-2mm}
 \label{fig_trajec}
 \end{figure}

We compare the performance of OGDA and EG in terms of gradient computations, on the bilinear minimax games considered in \citep{nemirovski2004prox}, without any constraint. In particular, we consider the following minimax problem:
\begin{align}
\label{eq:simul_bi}
\min_{\bbx \in \mathbb{R}^n} \max_{\bby \in \mathbb{R}^n} \ \bbx^{\top} \bbB \bby,
\end{align}
where $\bbB \in \mathbb{R}^{n \times n}$ is a sparse random matrix generated as follows. Each element is nonzero independently with probability $p$. If an element is chosen to be non-zero, it is chosen uniformly from $[-1,1]$. We compare the number of gradient computations required to reach a desired accuracy level for this problem in Figure \ref{fig_trajec}. As we observe, both EG and OGDA converge to the saddle point of the bilinear problem at a sublinear rate of $\mathcal{O}(1/k)$, but OGDA slightly outperforms EG in terms of number of gradient evaluations. Once again, this is due to the fact that for both descent and ascent updates of OGDA requires only one gradient computation each, while EG requires two gradient computations for both updates at each iteration. 

Note that the Lipschitz constants for the considered problem can be estimated from data using standard line search techniques. In particular, \cite{beck2009fast} discuss a backward tracking algorithm (ISTA with backtracking) which can be used to estimate the Lipschitz constants, in particular $L_{xx}$ and $L_{yy}$. Several variants of this algorithm, including the Lipschitz line-search algorithm (Algorithm 2) in \citep{schmidt2015non}, can also be used to estimate the Lipschitz constants $L_{xx}$ and $L_{yy}$. For the specific case of saddle point problems, a recent paper \citep{hamedani2018primal} proposes a line search algorithm, to estimate the Lipschitz constant $L_{xx}, L_{xy},L_{yx}$ and  $L_{yy}$. They propose an algorithm - Accelerated Primal Dual with backtracking (Algorithm 2.3) which uses a backtracking procedure, similar to  \citep{malitsky2018first}, to locally estimate the Lipschitz constants of the problem. Also, regarding the initial error, we would like to highlight that in the analysis of convex minimization problems or convex-concave saddle point problems, we often have a term of the form $\|\bbx_0-\bbx^*\|^2$ in the upper bound (for instance see \citep{nesterov2013introductory, monteiro2010complexity}) which shows the effect of initial error. This parameter is hard to estimate in general but can be upper bounded in specific cases. For example, if we are looking at for mixed strategies in zero-sum games, we know that we are looking for a solution lies in the probability simplex, so we can bound the initial error simply by the diameter of the simplex. In general, if we know that our iterates of the algorithm are going to lie in some compact set, we can upper bound the initial distance to the solution simply by the diameter of the compact set.

\section{Conclusions}
In this paper, we established convergence guarantees of the optimistic gradient ascent-descent (OGDA) and Extra-gradient (EG) methods for unconstrained, smooth, and convex-concave saddle point problems. In particular, we showed a sublinear convergence rate of $\mathcal{O}(1/k)$ in terms of function value error for both OGDA and EG by interpreting them as approximate variants of the proximal point method. This result leads to the first theoretical guarantee for OGDA in convex-concave saddle point problems. Moreover, it provides a simple and short proof for the convergence rate of EG in convex-concave saddle point problems when we measure optimality gap in terms of function value.

%
%% Acknowledgements should only appear in the accepted version.
%\section*{Acknowledgements}
%
%\textbf{Do not} include acknowledgements in the initial version of
%the paper submitted for blind review.

%If a paper is accepted, the final camera-ready version can (and
%probably should) include acknowledgements. In this case, please
%place such acknowledgements in an unnumbered section at the
%end of the paper. Typically, this will include thanks to reviewers
%who gave useful comments, to colleagues who contributed to the ideas,
%and to funding agencies and corporate sponsors that provided financial
%support.

% In the unusual situation where you want a paper to appear in the
% references without citing it in the main text, use \nocite
%\nocite{langley00}

%!TEX root = main.tex

%%%%%%%%%%%%%%%%%%%%%%%%%%%%%%%%%%%%%%%
%%%%%%%%%%%%%%%%%%%%%%%%%%%%%%%%%%%%%%%
%%%%%    S  E  C  T  I  O  N   %%%%%%%%%%%%%%%%%%%%%%%  
%%%%%%%%%%%%%%%%%%%%%%%%%%%%%%%%%%%%%%%
%%%%%%%%%%%%%%%%%%%%%%%%%%%%%%%%%%%%%%%
%\section{Appendix}\label{sec:appendix}

\appendix

%%%%%%%%%%%%%%%%%%%%%%%%%%%%%%%%%%%%%%%
%%%%%%%%%%%%%%%%%%%%%%%%%%%%%%%%%%%%%%%
%%%%%    S  U  B  ---  S  E  C  T  I  O  N   %%%%%%%%%%%%%%%%%
%%%%%%%%%%%%%%%%%%%%%%%%%%%%%%%%%%%%%%%
%%%%%%%%%%%%%%%%%%%%%%%%%%%%%%%%%%%%%%%
%\section{Proof of Lemma~\ref{lemma:operator_monotone}}\label{monotone_operator}

%%%%%%%%%%%%%%%%%%%%%%%%%%%%%%%%%%%%%%%
%%%%%%%%%%%%%%%%%%%%%%%%%%%%%%%%%%%%%%%
%%%%%    S  U  B  ---  S  E  C  T  I  O  N   %%%%%%%%%%%%%%%%%
%%%%%%%%%%%%%%%%%%%%%%%%%%%%%%%%%%%%%%%
%%%%%%%%%%%%%%%%%%%%%%%%%%%%%%%%%%%%%%%
\section{Proof of Theorem~\ref{theorem:prox_point_rate}}\label{appx:proof_of_thm_pp}

\hfill \newline
The update of the proximal point method can be written as:
\begin{align}
\bbz_{k+1} = \bbz_k - \eta F(\bbz_{k+1} )
\end{align}
According to this update we can show that
%%%
\begin{align}
\|\bbz_{k+1} - \bbz\|^2 = \|\bbz_k - \bbz\|^2 -  2\eta (\bbz_k - \bbz)^\top  F(\bbz_{k+1})  & + \eta^2 \|F(\bbz_{k+1})  \|^2 
\end{align}
%%%
Now add and subtract the inner product $2\eta\bbz_{k+1}^\top  F(\bbz_{k+1})$ to the right hand side and regroup the terms to obtain
%%%
\begin{align}
\|\bbz_{k+1} - \bbz\|^2 
&= \|\bbz_k - \bbz\|^2 - 2\eta (\bbz_{k+1} - \bbz)^{\top} F(\bbz_{k+1})- 2\eta (\bbx_{k} - \bbx_{k+1})^{\top}  F(\bbz_{k+1})\nonumber \\
&\quad + \eta^2 \| F(\bbz_{k+1}) \|^2.
\end{align}
%%%
Replace $F(\bbz_{k+1})$ with $(1/\eta)(-\bbz_{k+1}+\bbz_k)$ to obtain
%%%
\begin{align}
&\|\bbz_{k+1} - \bbz\|^2 \nonumber\\
&= \|\bbz_k - \bbz\|^2 - 2\eta (\bbz_{k+1} - \bbz)^{\top} F(\bbz_{k+1})+ 2(\bbz_{k} - \bbz_{k+1})^\top (\bbz_{k+1} - \bbz_k ) \nonumber \\
& \qquad + \|\bbz_{k+1} - \bbz_k  \|^2  \nonumber \\
&=  \|\bbz_k - \bbz\|^2 - 2\eta (\bbz_{k+1} - \bbz)^{\top} F(\bbz_{k+1})   - \|\bbz_{k+1} - \bbz_{k}\|^2 .
\end{align}
On rearranging the terms, we get the following
%%%
\begin{align}
&F(\bbz_{k+1}) ^{\top} (\bbz_{k+1} - \bbz) 
= \frac{1}{2\eta}  \|\bbz_k - \bbz\|^2 - \frac{1}{2\eta}  \|\bbz_{k+1} - \bbz\|^2 -\frac{1}{2\eta}  \|\bbz_{k+1} - \bbz_k\|^2,
\label{eq:to_add}
\end{align}
%%%
Now, on substituting $\bbz = \bbz^*$, and noting that $F(\bbz_{k+1}) ^{\top} (\bbz_{k+1} - \bbz^*) \geq 0$, we have:
\begin{align}
 \|\bbz_{k+1} - \bbz^*\|^2 \leq  \|\bbz_{k} - \bbz^*\|^2 -  \|\bbz_{k+1} - \bbz_k\|^2
\end{align}
and the proof of boundedness is complete.

On adding Equation \eqref{eq:to_add} from $k = 0, \cdots N-1$ and diving by $N$, we get:
\begin{align}
\frac{1}{N} \sum_{k=1}^{N} F(\bbz_{k}) ^{\top} (\bbz_{k} - \bbz) \leq \frac{\|\bbz_0 - \bbz\|^2}{\eta N}
\end{align}
Now, using Proposition~\ref{lemma:iter_erg_ineq} we can write
\begin{align}
| f(\hat{\bbx}_N, \hat{\bby}_N) - f^{\star} | \leq \frac{\|\bbx_0 - \bbx\|^2 + \|\bby_0 - \bby\|^2}{\eta N},
\end{align}
and the proof is complete.

\section{Proof of Lemma \ref{lemma:bdd_iter_EG}}\label{appx:proof_of_lemma_EG}

\hfill \newline
(a) Considering the updates in \eqref{EG_VI_mid_update} and \eqref{prox_VI_update_2} we can write the update of mid-points in EG as 
\begin{equation}\label{proof_EG_1000}
\bbz_{k+\frac{1}{2}} = \bbz_{k-\frac{1}{2}} - \eta F(\bbz_{k+\frac{1}{2}}) +\bbvarepsilon_k, 
\end{equation}
%%%
where,
%%%
\begin{equation}\label{proof_EG_2000}
\bbvarepsilon_k = \eta\left[(F(\bbz_{k+\frac{1}{2}}) -F(\bbz_{k-\frac{1}{2}}))- ( F(\bbz_k) - F(\bbz_{k-1}))\right].
\end{equation}
%%%
Therefore, we can simplify the last term in Equation \eqref{proof_mid_lemma_600} of Proposition~\ref{lemma_general_pp_approx} as follows:
%%%
\begin{align}   \label{eq:EG_almost_err} 
 \frac{1}{\eta} & {\bbvarepsilon_k}^{\top} (\bbz_{k+\frac{1}{2}} - \bbz) \nonumber \\
  &=  \frac{1}{\eta} \times[ (\eta F(\bbz_{k+\frac{1}{2}}) - \eta F(\bbz_k )) - (\eta F(\bbz_{k-\frac{1}{2}}) - \eta F(\bbz_{k-1} )) ]  ^{\top} (\bbz_{k+\frac{1}{2}} - \bbz) \nonumber \\
  &= (F(\bbz_{k+\frac{1}{2}}) - F(\bbz_k ))^{\top}(\bbz_{k+\frac{1}{2}} - \bbz)  - ( F(\bbz_{k-\frac{1}{2}}) -  F(\bbz_{k-1} ))^{\top} (\bbz_{k-\frac{1}{2}} - \bbz) \nonumber \\
  & \qquad \qquad \qquad \qquad - ( F(\bbz_{k-\frac{1}{2}}) -  F(\bbz_{k-1} ))^{\top} (\bbz_{k+\frac{1}{2}} - \bbz_{k-\frac{1}{2}}).
\end{align}
  %%%
Using Lipschitz continuity of the operator $F$ (Lemma \ref{lemma:operator_monotone}(b)) and Young's inequality, we have 
%to derive an upper bound for $ \frac{1}{\eta} {\bbvarepsilon_k}^{\top} (\bbz_{k+\frac{1}{2}} - \bbz)$ by 
%%%  
\begin{align}  \label{eq:EG_almost_err_2} 
 \frac{1}{\eta}  {\bbvarepsilon_k}^{\top} & (\bbz_{k+\frac{1}{2}} - \bbz) \nonumber \\
  & \leq F(\bbz_{k+\frac{1}{2}}) - F(\bbz_k ))^{\top}(\bbz_{k+\frac{1}{2}} - \bbz)  - ( F(\bbz_{k-\frac{1}{2}}) -  F(\bbz_{k-1} ))^{\top} (\bbz_{k-\frac{1}{2}} - \bbz) \nonumber \\
  & \qquad \qquad \qquad \qquad +L \| \bbz_{k-\frac{1}{2}} - \bbz_{k-1} \| \| \bbz_{k+\frac{1}{2}} - \bbz_{k-\frac{1}{2}} \| \nonumber \\
  & \leq F(\bbz_{k+\frac{1}{2}}) - F(\bbz_k ))^{\top}(\bbz_{k+\frac{1}{2}} - \bbz)  - ( F(\bbz_{k-\frac{1}{2}}) -  F(\bbz_{k-1} ))^{\top} (\bbz_{k-\frac{1}{2}} - \bbz) \nonumber \\
  & \qquad \qquad \qquad \qquad +\frac{L}{2} \| \bbz_{k-\frac{1}{2}} - \bbz_{k-1} \|^2 + \frac{L}{2} \| \bbz_{k+\frac{1}{2}} - \bbz_{k-\frac{1}{2}} \|^2
\end{align}
%%%%
Substituting the upper bound in \eqref{eq:EG_almost_err_2} into Equation \eqref{proof_mid_lemma_600} of Proposition~\ref{lemma_general_pp_approx}, implies that
%%%%
\begin{align}
 F(\bbz_{k+\frac{1}{2}}) & ^{\top}  (\bbz_{k+\frac{1}{2}} - \bbz) \nonumber  \\
&\leq \frac{1}{2\eta}\|\bbz_{k-\frac{1}{2}} - \bbz\|^2 -  \frac{1}{2\eta} \|\bbz_{k+\frac{1}{2}} - \bbz\|^2 - \frac{1}{2\eta} \|\bbz_{k+\frac{1}{2}} - \bbz_{k-\frac{1}{2}}\|^2 \nonumber \\
& \qquad +(F(\bbz_{k+\frac{1}{2}}) - F(\bbz_k))^{\top}(\bbz_{k+\frac{1}{2}} - \bbz) - (F(\bbz_{k-\frac{1}{2}}) - F(\bbz_{k-1}))^{\top}(\bbz_{k-\frac{1}{2}} - \bbz) \nonumber \\
& \qquad +  \frac{L}{2} \| \bbz_{k-\frac{1}{2}} - \bbz_{k-1}\|^2 +  \frac{L}{2} \|\bbz_{k+\frac{1}{2}} - \bbz_{k-\frac{1}{2}}\|^2.
\end{align}
%%%
Since $\eta< 1/L$, we have $- \frac{1}{2\eta} \|\bbz_{k+\frac{1}{2}} - \bbz_{k-\frac{1}{2}}\|^2+  \frac{L}{2} \|\bbz_{k+\frac{1}{2}} - \bbz_{k-\frac{1}{2}}\|^2\leq 0$ and therefore
%%%%
%%%%
\begin{align}\label{eq:before_sum_EG}
&F(\bbz_{k+\frac{1}{2}}) ^{\top}  (\bbz_{k+\frac{1}{2}} - \bbz) \nonumber  \\
&\leq \frac{1}{2\eta}\|\bbz_{k-\frac{1}{2}} - \bbz\|^2 -  \frac{1}{2\eta} \|\bbz_{k+\frac{1}{2}} - \bbz\|^2+  \frac{L}{2} \| \bbz_{k-\frac{1}{2}} - \bbz_{k-1}\|^2 \nonumber \\
& \quad +(F(\bbz_{k+\frac{1}{2}}) - F(\bbz_k))^{\top}(\bbz_{k+\frac{1}{2}} - \bbz) - (F(\bbz_{k-\frac{1}{2}}) - F(\bbz_{k-1}))^{\top}(\bbz_{k-\frac{1}{2}} - \bbz).
\end{align}
%%%
which completes the proof of Part (a). \\
(b) Based on the update of EG in \eqref{VI_EG_update}, we can write
%%%%
\begin{align}
& \|\bbz_{k} - \bbz\|^2 \nonumber \\
&= \|\bbz_{k} - \bbz_{k+1} + \bbz_{k+1} -  \bbz\|^2 \nonumber \\
&= \| \bbz_{k+1} - \bbz \|^2 + 2 ( \bbz - \bbz_{k+1})^\top ( \bbz_{k+1} - \bbz_k) + \| \bbz_{k+1} - \bbz_k \|^2 \nonumber \\
&= \| \bbz_{k+1} - \bbz \|^2 + 2 (\bbz - \bbz_{k+\frac{1}{2}})^\top( \bbz_{k+1} - \bbz_k )\nonumber\\
&\qquad + 2( \bbz_{k+\frac{1}{2}} - \bbz_{k+1})^\top ( \bbz_{k+1} - \bbz_k)+  \| \bbz_{k+1} - \bbz_k \|^2 \nonumber \\
&=  \| \bbz_{k+1} - \bbz \|^2 + 2 ( \bbz - \bbz_{k+\frac{1}{2}})^\top (\bbz_{k+1} - \bbz_k ) + \| \bbz_{k+\frac{1}{2}} - \bbz_k \|^2 - \| \bbz_{k+\frac{1}{2}} - \bbz_{k+1} \|^2.
\label{eq:EG_bdd_mid}
\end{align}
%%%
Now we proceed to bound the difference $\| \bbz_{k+\frac{1}{2}} - \bbz_{k+1} \|^2$. Using the fact that the operator $F$ is $L$-Lipschitz (Lemma \ref{lemma:operator_monotone}(b)), we have
%%%%
\begin{align}
\| \bbz_{k+\frac{1}{2}} - \bbz_{k+1} \|^2 &= \eta^2 \| F(\bbz_{k+\frac{1}{2}} - F(\bbz_k) \|^2 \nonumber \\
&\leq \eta^2 L^2 \| \bbz_{k+\frac{1}{2}} - \bbz_{k} \|^2.
\end{align}
Substituting this upper bound back into \eqref{eq:EG_bdd_mid} and taking $\bbz = \bbz^*$ implies
%%%
\begin{align}\label{eq:EG_bdd_mid_2}
\|\bbz_{k} & - \bbz^*\|^2 \nonumber \\
&\geq \| \bbz_{k+1} - \bbz^* \|^2 + 2 (\bbz^* - \bbz_{k+\frac{1}{2}})^\top( \bbz_{k+1}  - \bbz_k)  + (1 - \eta^2 L^2) \| \bbz_{k+\frac{1}{2}} - \bbz_k \|^2.
\end{align}
%%%%
Further, since the operator $F$ is monotone, we have
%%%
\begin{align}
(\bbz^* - \bbz_{k+\frac{1}{2}})^\top( \bbz_{k+1} - \bbz_k)  &=  \eta  (F(\bbz_{k+\frac{1}{2}}) )^\top(  \bbz_{k+\frac{1}{2}} - \bbz^*  ) \nonumber \\
&\geq \eta  ( F(\bbz_{k+\frac{1}{2}}) - F(\bbz^*))^\top( \bbz_{k+\frac{1}{2}} - \bbz^*) \nonumber \\
&\geq 0,
\end{align}
%%%
where in the first inequality we used the fact that $F(\bbz^*) = \bb0$ (Lemma \ref{lemma:operator_monotone}(c)), and the last inequality holds due to monotonicity of $F$ (Lemma \ref{lemma:operator_monotone}(a)). Therefore, we can replace the inner product $2(\bbz^* - \bbz_{k+\frac{1}{2}})^\top( \bbz_{k+1} - \bbz_k) $ in \eqref{eq:EG_bdd_mid_2} by its lower bound $0$ to obtain
%%%
\begin{align}\label{eq:EG_bdd_mid_222}
\|\bbz_{k} - \bbz^*\|^2 \geq \| \bbz_{k+1} - \bbz^* \|^2 + (1 - \eta^2 L^2) \| \bbz_{k+\frac{1}{2}} - \bbz_k \|^2
\end{align}
%%%%
The result in \eqref{eq:EG_bdd_mid_222} shows that the seqeunce $\|\bbz_{k} - \bbz^*\|^2$ is non-increasing. Therefore, for any iterate $k$, it holds that 
%%%
\begin{align}\label{eq:mid-iter_bdd_EG}
\|\bbz_k - \bbz^* \|^2 \leq \|\bbz_0 - \bbz^* \|^2.
\end{align}
%%%
Now, for all $k \geq 0$, we have:
\begin{align}\label{eq:iter_bdd_EG_new_set}
\| \bbz_{k+\frac{1}{2}} - \bbz^* \|^2 &\leq 2 \| \bbz_{k} - \bbz^* \|^2 + 2 \| \bbz_{k+\frac{1}{2}} - \bbz_{k} \|^2 \nonumber \\
&\leq \left(2 + \frac{2}{1 - \eta^2 L^2} \right) \| \bbz_{k} - \bbz^* \|^2 \nonumber \\
&\leq \left(2 + \frac{2}{1 - \eta^2 L^2} \right) \| \bbz_{0} - \bbz^* \|^2
\end{align} 
where the first inequality follows from the fact that $\forall \ \bba, \bbb \in \mathbb{R}^d, \|\bba + \bbb\|^2 \leq 2\| \bba\|^2 + 2\| \bbb \|^2$, the second inequality follows from \eqref{eq:EG_bdd_mid_222} and the third inequality follows from \eqref{eq:mid-iter_bdd_EG}. Therefore from \eqref{eq:mid-iter_bdd_EG} and \eqref{eq:iter_bdd_EG_new_set}, since $0 < 1 - \eta^2L^2 < 1$, we see that the iterates $\{\bbz_k\},\{ \bbz_{k+\frac{1}{2}} \}$ belong to the compact set $\ccalD$ defined in \eqref{bounded_set_eg}.
%is correct.

Now by summing both sides of \eqref{eq:EG_bdd_mid_222} for $k = 0,\dots, \infty$, we obtain
\begin{align}
(1 - \eta^2 L^2) \sum_{k=0}^{\infty} \| \bbz_{k+\frac{1}{2}} - \bbz_k \|^2 \leq \|\bbz_{0} - \bbz^*\|^2
\end{align}
Therefore, by regrouping the terms we obtain 
\begin{align}\label{eq:summable}
\sum_{k=0}^{\infty} \| \bbz_{k+\frac{1}{2}} - \bbz_k \|^2 \leq \frac{\|\bbz_{0} - \bbz^*\|^2} {1 - \eta^2 L^2},
\end{align}
and the claim in \eqref{bounded_sum_claim_EG} follows.

\section{Proof of Theorem \ref{thm_EG_convergence}}\label{proof_thm_EG}

\hfill \newline
Using Equation \eqref{eq:before_sum_EG_1} of Lemma \ref{lemma:bdd_iter_EG}(a), summing it from $k = 0, \cdots , N-1$ and dividing by $N$, we obtain
%%%
\begin{align}
&\frac{1}{N} \sum_{k = 0}^{N-1} F(\bbz_{k+\frac{1}{2}}) ^{\top}  (\bbz_{k+\frac{1}{2}} - \bbz) \nonumber\\
& \leq \frac{\frac{1}{2\eta}\| \bbz_0\! -\! \bbz \|^2 + (F(\bbz_{N-\frac{1}{2}}) \!-\! F(\bbz_{N-1}))^{\top}(\bbz_{N-\frac{1}{2}}\!-\! \bbz)}{N}\!+\! \frac{L}{2N} \sum_{k=0}^{N-1} \| \bbz_{k-\frac{1}{2}} \!-\! \bbz_{k-1}\|^2.
\end{align}
%%%%
The bound in Equation \eqref{bounded_sum_claim_EG} from Lemma \ref{lemma:bdd_iter_EG}(b) yields
%%%
\begin{align}
\frac{1}{N} \sum_{k = 0}^{N-1} & F(\bbz_{k+\frac{1}{2}}) ^{\top}  (\bbz_{k+\frac{1}{2}} - \bbz) \nonumber\\
&\leq \frac{L\| \bbz_0 - \bbz \|^2 + (F(\bbz_{N-\frac{1}{2}}) - F(\bbz_{N-1}))^{\top}(\bbz_{N-\frac{1}{2}} - \bbz)}{N} + \frac{L \| \bbz_{0} - \bbz^*\|^2}{2(1 - \eta^2 L^2)N}  \nonumber \\
&\leq \frac{L\| \bbz_0 - \bbz \|^2 +L\|\bbz_{N-\frac{1}{2}}-\bbz_{N-1}\|\|\bbz_{N-\frac{1}{2}} - \bbz\| +\frac{L}{2(1-\sigma^2)}\| \bbz_{0} - \bbz^*\|^2}{N},
\label{eq:rate_1_EG}
\end{align}
%%%
where in the last inequality we use Lipschitz continuity of the operator $F$ (Lemma \ref{lemma:operator_monotone}(b)) and the fact that $\eta = \frac{\sigma}{L}$. 
Note that for any $\bbz_1, \bbz_2 \in \mathcal{D}$, we have:
\begin{align}
\| \bbz_1 - \bbz_2\|  &\leq \|\bbz_1 - \bbz^*\| + \|\bbz_2 - \bbz^*\| \nonumber \\
&\leq \sqrt{\left(2 + \frac{2}{1 - \eta^2 L^2} \right)}\|\bbz_0 - \bbz^*\| + \sqrt{\left(2 + \frac{2}{1 - \eta^2 L^2} \right)}\|\bbz_0 - \bbz^*\| \nonumber \\
&\leq 2\sqrt{D \left(2 + \frac{2}{1 - \sigma^2} \right)}.
\end{align}
Therefore, for any point $\bbz$ in the set  $\ccalD$, we can substitute the preceding relation in Equation \eqref{eq:rate_1_EG} to get
%%%
\begin{align}
\frac{1}{N} \sum_{k = 0}^{N-1} F(\bbz_{k+\frac{1}{2}}) ^{\top}  (\bbz_{k+\frac{1}{2}} - \bbz) \leq \frac{DL \left( 16 + \frac{33}{2(1-\sigma^2)} \right) }{N}.
\end{align}
%%%
%\red{We have (We relabel the half iterates as the integral iterates)}
%\begin{align}
%\frac{1}{N} \sum_{k = 1}^{N} F(\bbz_{k}) ^{\top}  (\bbz_{k} - \bbz) &= \frac{1}{N} \sum_{k = 1}^{N} [ \nabla_{\bbx}f(\bbx_k, \bby_k)^{\top} (\bbx_{k} - \bbx ) + \nabla_{\bby}f(\bbx_k, \bby_k)^{\top} (\bby - \bby_k ) ] \nonumber \\
%&\geq \frac{1}{N} \sum_{k = 1}^{N} [f(\bbx_k, \bby_k) - f(\bbx, \bby_k) + f(\bbx_k, \bby) - f(\bbx_k, \bby_k) ] \nonumber \\
%&= \frac{1}{N} \sum_{k = 1}^{N} [ f(\bbx_k, \bby)  - f(\bbx, \bby_k) ] \nonumber \\
%&\geq f(\hat{\bbx}_N, \bby) - f(\bbx, \hat{\bby}_N)
%\label{eq:error_crit_EG}
%\end{align}
Now, using Proposition~\ref{lemma:iter_erg_ineq}
we have that for all $\bbx, \bby \in \mathcal{D}$:
\begin{align}
f(\hat{\bbx}_N, \bby) - f(\bbx, \hat{\bby}_N) \leq \frac{DL \left( 16 + \frac{33}{2(1-\sigma^2)} \right)}{N},
\end{align}
where $\hat{\bbx}_N = \frac{1}{N}  \sum_{k = 0}^{N-1} \bbx_{k+1/2}$ and $\hat{\bby}_N = \frac{1}{N}  \sum_{k = 0}^{N-1} \bby_{k+1/2}$ 
%Substituting Inequality \eqref{eq:error_crit_EG} in \eqref{eq:rate_1_EG}, we have that for all $\bbx, \bby \in \mathcal{D}$:
which gives us the following convergence result:
\begin{align}
\left[\max_{\bby: (\hat{\bbx}_N, \bby) \in \mathcal{D}} f(\hat{\bbx}_N, \bby) - f^{\star}\right] + \left[f^{\star} - \min_{\bbx: (\bbx, \hat{\bby}_N) \in \mathcal{D}} f(\bbx, \hat{\bby}_N)\right] \leq \frac{DL \left(16 + \frac{33}{2(1-\sigma^2)} \right)}{N}, \nonumber
\end{align}
where $f^{\star} = f(\bbx^*, \bby^*)$.
%\max_{\bbx \in \mathcal{D}} \min_{\bby \in \mathcal{D}} f(\bbx, \bby)$. 

\bibliography{references}
\bibliographystyle{icml2019}

\end{document}